\numberwithin{equation}{section}
\theoremstyle{plain}
        \newtheorem{theorem}{Theorem}[section]
        \newtheorem{assumption}{Assumption}[section]
        \newtheorem{proposition}[theorem]{Proposition}
        \newtheorem{lemma}[theorem]{Lemma}
        \newtheorem{corollary}[theorem]{Corollary} 
        \newtheorem{definition}[theorem]{Definition} 
        \newtheorem{remark}[theorem]{Remark}  
        \newtheorem{example}[theorem]{Example}
        \newtheorem*{claim*}{Claim}
        \newtheorem*{fact*}{Fact} 
\newtheorem*{theorem*}{Theorem}
\newtheorem*{definition*}{Definition}
\newtheorem*{proposition*}{Proposition}
\renewcommand\div{\text{div\,}}
\newcommand \loc {\text{loc}}
\newcommand{\R}{\mathbb{R}}
\newcommand{\Q}{\mathbb{Q}}
\newcommand{\N}{\mathbb{N}}
\newcommand{\f}{f^{\alpha,\beta,\gamma,\delta}_t}
\newcommand{\fr}{f^{\alpha,\beta,\gamma,\delta}_r}
\newcommand{\Ur}{U^{\alpha,\beta,\gamma,\delta}_r}
\newcommand{\cF}{\mathcal{F}}
\newcommand{\cM}{\mathcal{M}}
\newcommand{\cL}{\mathcal{L}}
\newcommand{\cP}{\mathcal{P}}
\newcommand{\cU}{\mathcal{U}}
\newcommand{\cR}{\mathcal{R}}
\newcommand{\cD}{\mathcal{D}}
\newcommand{\cE}{\mathcal{E}}
\newcommand{\cH}{\mathcal{H}}
\newcommand{\TCRE}{\mathcal{TCRE}}
\newcommand{\Do}{\R^{2d}}
\newcommand{\G}{\R^{4d}\times S^{d-1}}
\newcommand{\sgn}{\text{sgn}}
\renewcommand{\d}{\partial} 
\newcommand{\dd}{{\,\rm d}}
 \newcommand{\jb}[1]{\langle #1\rangle}
 \newcommand{\xs}{x_*}
 \newcommand{\vs}{v_*}
\newcommand{\vp}{v'}
\newcommand{\vsp}{v'_*}
\newcommand{\fS}{f_*}
\newcommand{\fP}{f'}
\newcommand{\fSP}{f'_*}
\title{A variational approach to a fuzzy Boltzmann equation}
\author[M.~Erbar]{Matthias Erbar}
\author[Z.~He]{Zihui He}
\address[M.~Erbar and Z.~He]
{Fakult\"at f\"ur Mathematik, Universit\"at Bielefeld, Postfach 100131, 33501 Bielefeld, Germany}
\email{erbar@math.uni-bielefeld.de, zihui.he@uni-bielefeld.de}
\keywords{Boltzmann equation, delocalised collision, GENERIC system}
\subjclass[2020]{35Q20, 82C40}
\begin{document}

\begin{abstract}
We study a fuzzy Boltzmann equation, where particles interact via delocalised collisions, in contrast to classical Boltzmann equations.
We discuss the existence and uniqueness of solutions and provide a natural variational characterisation by casting the fuzzy Boltzmann equation into the framework of GENERIC systems (General Equations for Non-Equilibrium Reversible-Irreversible Coupling).
\end{abstract}
\maketitle

\section{Introduction}

In this paper we are interested in the variational characterisation of PDEs that combine dissipative as well as conservative, Hamiltonian effects. Purely dissipative evolution equations often admit a variational description as a gradient flow, or steepest descent,  $\partial_tx=-\mathsf M D\mathsf S$ of an entropy functional $\mathsf S$ for an appropriate notion of geometry on the space the solution evolves in that is encoded by a symmetric non-negative operator $\mathsf M$. For instance, vast classes of diffusive PDEs can be seen as gradient flows for the Wasserstein geometry on the space of probability measures. This interpretation has been made rigorous and exploited in many cases, see \cite{AGS08} for an overview. Also other geometries adapted to different evolution mechanisms have been considered more recently. Identifying the variational structure of a PDE as a gradient is a powerful tool for its analysis, providing general methods for proving well-posedness \cite{AGS08} and studying the trend to equilibrium \cite{CMV03}, a robust framework for analysing singular limits \cite{SS04, AMPSV12}, or giving rise to structure-preserving numerical schemes.

In the context of evolution equations stemming from statistical mechanics, gradient flows correspond to systems close to thermodynamic equilibrium. 
For systems far from equilibrium, one expects a combination of conservative and dissipative effects in the dynamics. The framework GENERIC (General Equations for Non-Equilibrium Reversible Irreversible Coupling, see \cite{Ott05}) provides a flexible tool to derive thermodynamically consistent evolution equations for such systems. The key building blocks are an energy functional $\mathsf E$, an entropy functional $\mathsf S$ as well as a Poisson operator $\mathsf L$ and a dissipative operator $\mathsf M$ , subject to certain properties and relations, and the evolution takes the form $\partial_t \mathsf z=\mathsf L \mathsf d\mathsf E+\mathsf M \mathsf d\mathsf S$. Also the GENERIC interpretation of an evolution equation encodes a variational structure, as has been pointed out in \cite{DPZ13} and as we shell describe in detail below. This structure is tightly linked to the large deviations behaviour of microscopic stochastic particle systems underlying the macroscopic evolution equation in question.

An emblematic example of such out-of equilibrium dynamics is the Boltzmann equation describing the evolution of the 1-particle distribution of a rarefied gas. The mathematical analysis of this equation is extremely challenging and many fundamental questions regarding e.g. the Cauchy problem or the rigorous derivation from particle dynamics have not yet been answered completely. Having a variational structure for this equation at disposal might facilitate further progress on some of these questions, which is our main motivation here. In order to make progress in this direction, we consider in this paper as a toy model a \emph{fuzzy} Boltzmann equation in which particle collisions are delocalised. We give a rigorous variational description of it based on its GENERIC structure by characterising solutions as the minimisers of a natural functional involving the entropy and suitable dissipation terms. The delocalisation takes away some of the immense technical difficulties associated with the Boltzmann equation while preserving most of its features. We build on earlier work in \cite{Erb23} of the first author, where a gradient flow characterisation of the spatially homogeneous Boltzmann equation has been given.

In the remaining part of this introduction we will first introduce the fuzzy Boltzmann equation and discuss its well-posedness. Then we will describe in more detail the abstract GENERIC framework and its variational structure. Finally, we present our results giving a variational characterisation of the fuzzy Boltzmann equation.

\subsection{The (fuzzy) Boltzmann equation}

The classical Boltzmann equation is given by
 \begin{equation}
\label{IMHBE}
\left\{
\begin{aligned}
&\d_tf+v\cdot\nabla_x f=Q(f), \\
&Q(f)=\int_{\R^d\times S^{d-1}}
\big(f' f'_*-ff_*\big)B(v-v_*,\omega)\dd v_*\dd \omega,
\end{aligned}
\right.
\end{equation} 
where the unknown $f:[0,T]\times \R^{2d}\to \R_+$ with $d\geq 3$ corresponds to the density of particles at time $t$ with position $x\in\R^d$, and velocity $v\in\R^d$. 
The linear transport term $v\cdot\nabla_x f$ describes the advection of the density by the velocity of the particles, while the collsion term $Q(f)$ describes the interaction between particles via elastic collisions. Here $v,\vs$ and $\vp,\vsp$ denote the
velocities before and after a collision respectively which are related
according to
\begin{align}\label{eq:pre-post}
  \vp = v - \jb{v-\vs,\omega}\omega\;,\quad \vsp = \vs +
  \jb{v-\vs,\omega}\omega\;,\quad \omega\in S^{d-1}\;,
\end{align}
and we use the notation
$f=f(x,v)$, $f_*=f(x,\vs)$, $f'=f(x,\vp)$, $f_*'=f(x,\vsp)$.
The collision kernel $B$ encoded the specific details of the underlying collision process. We refer to \cite{Vil02} for an in-depth review on the Boltzmann equation. One major difficulty in its analysis stems from the observation that while the natural a priori bounds on solutions include the conservation of mass, i.e.~$L^1$-norm, of $f$, the collision term typically requires $L^2$-integrability of $f$ in the spatial variable $x$ to be meaningful. For this reason, the general existence theory requires the notion of renormalised solution, see \cite{DL89b,AV02,GS10}.
 
We consider in this paper the following \emph{fuzzy} Boltzmann equation
\begin{equation}
\label{FBE}
\d_t f+v\cdot \nabla_x f=Q_{\sf fuz}(f)\;,
\end{equation}
where particles with states $(x,v)$ and $(x_*,v_*)$ interact via \emph{delocalised collisions} obtaining new states $(x,v')$ and $(x_*,v_*')$ with $v',v_*'$ given by \eqref{eq:pre-post}. More precisely, the 
modified collision term $Q_{\sf fuz}(f)$ is defined by
\begin{equation*}
Q_{\sf fuz}(f)=\int_{\R^{2d}\times S^{d-1}}
\big(f'f'_*-ff_*\big)B(v-v_*,w)k(x-x_*)\dd x_*\dd v_*\dd \omega\;,
\end{equation*}
where we now set
\begin{align}\label{eq:shorthand}
f=f(x,v),\quad f_*=f(x_*,v_*),\quad f'=f(x,v'),\quad f'_*=f(x_*,v'_*)\;,
\end{align}
and $k:\R^d\to\R^+$ is a kernel modulating the spatial interaction. We assume that the collision kernel  $B:\R^{d}\times S^{d-1}\to \R_+$ is bounded away from zero and grows at most like $\langle v-v_*\rangle^{\mu}$, for some $\mu\in(-\infty,1]$, where $\langle v \rangle:=\sqrt{1+|v|^2}$ denotes the Japanese bracket. For concreteness, we assume that $k$ is proportional to $\exp(-\gamma \jb{y})$ with $\gamma>0$. The detailed assumption can be found in Assumption~\ref{CK}.  

 Note that the fuzzy collision term has the a priori bound 
$\|Q_{\sf fuz}(f)\|_{L^1}\lesssim\|B\|_{L^\infty}\|f\|_{L^1}^2$.
As a consequence, the fuzzy Boltzmann equation \eqref{FBE} is structurally closer to the spatially {\it homogeneous} Boltzmann equation (the distribution of particles independent of position $x$)
\begin{equation}
\label{HBE}
\d_tf(v)=Q(f)(v), \quad f:[0,\infty)\times \R^d\to\R_+\;,
\end{equation}
whose well-posedness problems have been studied in an $L^1$-framework e.g. in \cite{Ark72,Ark72b,MW99,Wen99,Lu99}. 

We will first address the solvability of the fuzzy Boltzmann equation \eqref{FBE}  (see Theorem \ref{thm:1} below). We show in particular that for initial data $f_0\in L^1(\R^{2d};\R_+)$ with $(\jb{x}^2+\jb{v}^{2+\mu} )f_0,  f_0\log f_0 \in L^1(\R^{2d})$ there exist a unique weak solution $(f_t)\in C^1\big([0,\infty),L^1(\R^{2d})\big)$. Moreover, conservation of mass, momentum and energy holds, i.e.
\begin{equation*}
\int_{\R^{2d}}(1,v,|v|^2)f_t = \int_{\R^{2d}}(1,v,|v|^2)f_0\quad \forall t\geq 0\;.
\end{equation*}
Hence, we can restrict attention to probability densities $f$ in the sequel. Furthermore, we show that the \emph{entropy identity} holds: Denoting by $\cH(f)=\int_{\R^{2d}}f\log f $ the Boltzmann-Shannon entropy of the probability density $f$, we have
\begin{equation*}
    \cH(f_t)-\cH(f_s)=-\int_s^t \cD(f_r)\dd r\le0\quad \forall s\leq t\;,
\end{equation*}
where 
\begin{equation}
\label{eq:def-dissipation}
\cD(f):=\frac14\int_{\R^{4d}\times S^{d-1}}\big(f'f'_*-ff_*\big)\big(\log(f'f'_*)-\log(ff_*)\big)Bk \dd \sigma\;,
\end{equation}
is the entropy dissipation functional where $\dd\sigma=\dd x\dd x_*\dd v\dd v_*\dd\omega$ denotes the Hausdorff measure on $\R^{4d}\times S^{d-1}$.

To obtain the previous results, we follow the approach of Arkeryd \cite{Ark72} concerning the solvability and to establish an entropy inequality $\cH(f_t)-\cH(f_0)\leq -\int_s^t\cD(f_r)\dd r$. The entropy identity will then be a consequence of a refined analysis as part of the variational characterisation of solutions in Theorem \ref{thm:main-intro} below.

In subsequent work, we consider the limit of vanishing fuzziness, where the spatial kernel $k$ localises to a Dirac measure (i.e. taking $\alpha\to\infty$ in the definition of $k$). We show that the unique solution to the fuzzy Boltzmann equation converges up to subsequences to a renormalised solution of the classical Boltzmann equation \eqref{IMHBE}.

Let us note that a delocalised collision term has first been proposed by Morrey \cite{Mor55} in the context of the Maxwell-Boltzmann equation. Various other models feature a similar non-local collision term such as the Povzner equation \cite{Pov62}, where the collision kernel $B$ and the post-collisional velocities depend on the positions $x$ and $x_*$, or the Enskog equation describing a dense gas and taking the size of particles into account, see \cite{Ark86,AC90}.

\subsection{GENERIC structures}

The GENERIC formalism has been introduced in \cite{GO97a,GO97b}, we refer to the book \cite{Ott05} for an extensive treatment. In its abstract form, a GENERIC system consists of the building blocks $(\mathsf Z, \mathsf E, \mathsf S, \mathsf L, \mathsf M)$ and describes the evolution of an unknown $\mathsf z$ in the state space $\mathsf Z$ via the equation
\begin{equation}\label{eq:generic}
\partial_t \mathsf z = \mathsf L \mathsf d\mathsf E+\mathsf M \mathsf d\mathsf S\;.
\end{equation}
The functionals $\mathsf E,\mathsf S:\mathsf Z\to \R$ are interpreted as energy and entropy and $\mathsf d\mathsf E$, $\mathsf d\mathsf S$ are appropriate differentials. Moreover, one assumes that:
\begin{itemize}
\item $\mathsf L=\mathsf L(\mathsf z)$ for each $\mathsf z\in \mathsf Z$ is an antisymmetric operator mapping cotangent to tangent vectors and satisfying the Jacobi identity
\begin{equation*}\label{eq:jacobi}
\{\{\mathsf G_1,\mathsf G_2\}_{\mathsf L},\mathsf G_3\}_{\mathsf L}+\{\{\mathsf G_2,\mathsf G_3\}_{\mathsf L},\mathsf G_1\}_{\mathsf L}+\{\{\mathsf G_3,\mathsf G_1\}_{\mathsf L},\mathsf G_2\}_{\mathsf L}=0\;,
\end{equation*}
for all functions $\mathsf G_i:\mathsf Z\to \R$, $i=1,2,3$, where the Poisson bracket $\{\cdot,\cdot\}_{\mathsf L}$ is defined by
\[\{\mathsf F,\mathsf G\}_{\mathsf L}=\mathsf d\mathsf F\cdot\mathsf L \mathsf d\mathsf G\;.\]
Here, $\cdot$ signifies the duality pairing between cotangent and tangent space of $\mathsf Z$ and the asymmetry of $\mathsf L$ reads as $\mathsf v\cdot \mathsf L\mathsf w= -\mathsf w^*\cdot \mathsf L\mathsf v^*$, where $\mathsf v^*,\mathsf w^*$ refer to the dual vectors of $\mathsf v,\mathsf w$;
\item $\mathsf M=\mathsf M(\mathsf z)$ for each $\mathsf z\in \mathsf Z$ is a symmetric and positive semi-definite operator mapping cotangent to tangent vectors in the sense that
\[\mathsf v\cdot \mathsf M\mathsf w=\mathsf w^*\cdot \mathsf M\mathsf v^*\;,\quad \mathsf v^*\cdot \mathsf M\mathsf v\geq 0\quad \forall \mathsf v,\mathsf w\;;\]
\item the following \emph{degeneracy conditions} are satisfied
\begin{equation}\label{eq:degen}
\mathsf L \mathsf d \mathsf S=0\;,\quad \mathsf M \mathsf d\mathsf E = 0\;.
\end{equation}
\end{itemize}
The last property ensures that along any solution to \eqref{eq:generic} the energy $\mathsf E$ is conserved and the entropy $\mathsf S$ in non-decreasing:
\begin{equation*}
\frac{\dd}{\dd t} \mathsf E(\mathsf z_t) = 0\;,\quad \frac{\dd}{\dd t} \mathsf S(\mathsf z_t) = \mathsf d \mathsf S\cdot \mathsf M \mathsf d\mathsf S \geq 0\;.
\end{equation*}
The term $\mathsf L\mathsf d \mathsf E$ describes the conservative Hamiltonian part of the evolution, while the second term $\mathsf M\mathsf d\mathsf S$ corresponds to the dissipative part. Choosing in particular $\mathsf E=0$ the equation \eqref{eq:generic} becomes $
\partial_t \mathsf z = \mathsf M\mathsf d\mathsf S=\nabla \mathsf S$, describing the (upward) \emph{gradient flow} of the functional $\mathsf S$ in the geometry determined by $\mathsf M$ by considering the Riemannian metric that is given formally on the tangent and cotangent space by
\[
(\mathsf x,\mathsf y)_{\mathsf M^{-1}}=\mathsf x\cdot \mathsf M^{-1} \mathsf y\;,\quad
(\mathsf v,\mathsf w)_{\mathsf M}=\mathsf v\cdot \mathsf M \mathsf w\;.
\]

As pointed out in \cite{DPZ13}, the GENERIC equation \eqref{eq:generic} can formally be characterised variationally as follows: for a curve $\mathsf z:[0,T]\to\mathsf Z$ define the functional
\begin{equation}\label{eq:J-quad}
J(\mathsf z) = -\mathsf S(\mathsf z_T)+\mathsf S(\mathsf z_T) +\frac12 \int_0^T \|\partial_t \mathsf z -\mathsf L\mathsf d\mathsf E\|_{\mathsf M^{-1}}^2 + \|\mathsf d\mathsf S\|_{\mathsf M}^2\; \dd t\;.
\end{equation}
Then $J(\mathsf z)\geq 0$ and a curve $\mathsf z:[0,T]\to\mathsf Z$ is a solution to \eqref{eq:generic} if and only if $J(\mathsf z)=0$. This is a consequence of the anti-symmetry of $\mathsf L$ and the degeneracy conditions \eqref{eq:degen} by noting that 
\[
\|\partial_t\mathsf z -\mathsf L\mathsf d \mathsf E - \mathsf M \mathsf d \mathsf S \|_{\mathsf M^{-1}}^2 
= \|\partial_t \mathsf z -\mathsf L\mathsf d\mathsf E\|_{\mathsf M^{-1}}^2 + \|\mathsf d\mathsf S\|_{\mathsf M}^2
- 2(\partial_t \mathsf z, \mathsf M\mathsf d \mathsf S)_{\mathsf M^{-1}}\;.
\]
In the case of a gradient flow ($\mathsf E=0$), this characterisation is well-known and corresponds to De Giorgi's characterisation of gradient flows as curves of maximal slope (see \cite{AGS08}). 

The framework above can be generalised by introducing a pair of dual dissipation potentials $\mathsf R:T\mathsf Z\to[0,\infty)$ and $\mathsf R^*:T^*\mathsf Z\to [0,\infty)$ defined on the (co)-tangent spaces and that are convex conjugate to each other and to consider the functional
\begin{equation}\label{eq:J-gen}
J(\mathsf z) = -\mathsf S(\mathsf z_T)+\mathsf S(\mathsf z_T) + \int_0^T \mathsf R(\partial_t \mathsf z -\mathsf L\mathsf d\mathsf E) + \mathsf R^*(\mathsf d\mathsf S)\; \dd t\;.
\end{equation}
Now, again $J(\mathsf z)\geq 0$ and a the curves satisfying $J(\mathsf z)=0$ are the solutions to 
\begin{equation}\label{eq:generic-gen}
\partial_t\mathsf z = \mathsf L\mathsf d\mathsf E + \partial \mathsf R^*(\mathsf d\mathsf S)\;,
\end{equation}
where $\partial \mathsf R^*(\mathsf d\mathsf S)$ replaces the linear term $\mathsf M\mathsf d\mathsf S$. This is a consequence of the duality estimate
\[
\mathsf x\cdot \mathsf v \leq \mathsf R(\mathsf x)+\mathsf R^*(\mathsf v)\quad \text{with equality iff } \mathsf x=\partial \mathsf R^*(\mathsf v)\;.
\]
One compelling motivation for considering such generalised variational structures is their connection to the large deviation behaviour of microscopic particle systems underlying the macroscopic evolution \eqref{eq:generic-gen} in concrete situations. Namely, it has been realised that the functional $J$ appears as the rate functional for path-level large deviations of the underlying microscopic system, see \cite{ADPZ11,MPR14} in the context of gradient flows and \cite{DPZ13,KLMP18,PRS24} for connections with GENERIC.
Here, the quadratic structure \eqref{eq:J-quad} typically arises for diffusive systems, while generalised structures \eqref{eq:J-gen} are connected e.g. for systems with jumps. Also generalised variational structures appear naturally in certain singular limits, see e.g.~\cite{LMPR17}.

The variational characterisation \eqref{eq:J-gen} has been made rigorous for instance in \cite{DPZ13} for kinetic Fokker-Planck equations or \cite{BBB17} for linear Boltzmann equations. In an abstract setting, it has been used in \cite{JST22} to develop approximation schemes for solutions similar to the minimising movement scheme for gradient flows.

\subsection{Variational structure of the fuzzy Boltzmann equation}

Our main goal in this paper is to exploit the GENERIC structure and make the above variational characterisation rigorous in the case of the fuzzy Boltzmann equation \eqref{FBE}. In order to cast this equation formally into the framework above, we consider as state space $\mathsf Z$ the space of probability densities on $\R^{2d}$ and the define the energy and entropy functionals as
\begin{align*}
\mathsf E(f)&=\frac12\int_{\R^{2d}}|v|^2f \;,\quad \mathsf S(f)=-\cH(f) = -\int_{\R^{2d}} f\log f\;.
\end{align*}
For each $f$ we define operators $\mathsf M(f)$ and $\mathsf L(f)$ acting on sufficiently smooth and decaying functions $g$ via 
\begin{equation}\label{eq:LM}
\begin{aligned}
\mathsf L(f)g &=-\div(f J\nabla g),\quad J=\begin{pmatrix}
0 & {\sf id}\\
-{\sf id} & 0
\end{pmatrix}\;,\\
\mathsf M(f)g&=-\overline\nabla\cdot (Bk\Lambda (f)\overline\nabla g)\;.\\ 
\end{aligned}
\end{equation}
Here, $\nabla=\begin{pmatrix}
    \nabla_x\\
    \nabla_v
\end{pmatrix}$ denotes the gradient on $\R^{2d}$. Further, 
$\overline\nabla g:= g'+g_*'-g-g_*$ recalling the notation \eqref{eq:shorthand}.
$\Lambda(f)$ is shorthand for $\Lambda(ff_*,f'f_*')$ with $\Lambda(s,t):=(s-t)/(\log s-\log t)$ denoting the logarithmic mean and finally the "divergence" $\overline\nabla\cdot$ is the adjoint in $L^2$ of $\overline\nabla$. More precisely, for a function $U:\R^{4d}\times S^{d-1}\to \R$ we set 
\begin{align*}
\overline \nabla \cdot U (x,v) := \frac14\int
\big[&U(x,x_*,v,v_*,\omega)+U(x_*,x,v_*,v,\omega)\\
-&U(x,x_*,v',v_*',\omega)-U(x_*,x,v_*',v',\omega) \big]\dd x_*\dd v_*\dd\omega\;.
\end{align*}
This choice of building blocks formally satisfies the assumptions of GENERIC and the fuzzy Boltzmann equation \eqref{FBE} is indeed formally equivalent to \eqref{eq:generic}. Namely, we have
\[
\mathsf L(f)\mathsf d\mathsf E(f) = -v\cdot \nabla_x f\;, \quad \mathsf M(f)\mathsf d\mathsf S(f) = Q_{\sf fuz}(f)\;.  
\]
We defer the verification of these claims to Appendix~\ref{app:GS}.
We note that this is in accordance with Theorem~\ref{thm:1} stating that the energy $\mathsf E$  is conserved and the entropy $\mathsf S$ is non-decreasing along solutions of the fuzzy Boltzmann equation. We also note that the structure above is an extension to the spatially inhomogeneous setting of the gradient flow structure for the spatially homogeneous Boltzmann equation found in \cite{Erb23}. 

After these formal considerations, we now present our main result giving a variational characterisation of the fuzzy Boltzmann equation inspired by \eqref{eq:J-gen}, \eqref{eq:generic-gen}.

In order to define the dual dissipation potentials $\mathsf R(\partial_t f-\mathsf L\mathsf d\mathsf E)=\mathsf R(\partial_t f+v\cdot\nabla_x f)$, along a  curve $(f_t)$ of probability densities, we look for a functions $U_t:\R^{4d}\times S^{d-1}\to\R$ such that the following \emph{transport-collsion rate equation} holds
\begin{equation}\label{eq:TCRE-intro}
\partial_t f +v\cdot \nabla_x f+\overline\nabla \cdot U = 0\;.
\end{equation} 
Note that for a solution to the fuzzy Boltzmann equation we have $U=ff_*-f'f_*'$. In general we think of $U$ as prescribing the rate at which collisions happen between the particles. Fix a pair of even, lower semi-continuous convex conjugated functions $\Psi, \Psi^{*}:\R\to[0,\infty)$ with $\Psi(0)=\Psi^{*}(0)=0$ and a 1-homogeneous concave function $\theta:[0,\infty)\times[0,\infty)\to [0,\infty)$ such that the compatibility condition
\[(\Psi^{*})'(\log s-\log t)\theta(s,t) = s-t\qquad\forall s,t>0\]
holds (see Assumption \ref{ass:gen-grad} for additional assumptions on $\Psi^{*}$ and $\theta$). Set we set
\begin{equation}\label{eq:potentials-intro}
\begin{aligned}
\mathcal R(f,U)&:=\frac14\int \Psi\Big(\frac{U}{\theta(f)Bk}\Big)\theta(f)Bk\dd v\dd\vs\dd\omega\;,\\
\mathcal D_{\Psi^{*}}(f)&:=\mathcal R^{*}(f, -DH(f)):=\frac14\int \Psi^{*}(-\bar\nabla \log f)\theta(f)Bk\dd v\dd\vs\dd\omega\;,
\end{aligned}
\end{equation}

where we have set $\theta(f):=\theta(ff_*,f'f_*')$. We refer to Sections \ref{sec:TCRE} and \ref{sec:potential} for the
precise construction where we study \eqref{eq:TCRE-intro} and \eqref{eq:potentials-intro} in a natural measure valued setting.
Then we have (see Thm.~\ref{thm:main} below):

\begin{theorem}\label{thm:main-intro}
Let $(f_t)_{t\in [0,T]}$ be a curve of probability measures and  $(U_t)$ be solving \eqref{eq:TCRE-intro} and assume that $\cH(f_0)<\infty$ and $(f_t)$ has finite and integrable in time moments of order 2 in $x$ and $2+\max(0,\gamma)$ in $v$. Then we have
\begin{equation*}\label{eq:LT-intro}
\cL_T(f,U):=\cH(f_T)-\cH(f_0) +\int_0^TD_{\Psi^*}(f_t)+\cR(f_t,U_t)\dd t \ge 0\;.
\end{equation*}
Moreover, we have $\cL_T(f,U)=0$ if and only if $(f_t)$ is a weak solution of the fuzzy Boltzmann equation \eqref{FBE} with   
$\int_0^T\cD_{\Psi^*}(f_t)\dd t<\infty$.
In this case, we have the entropy identity
\begin{equation*}
    \cH(f_t)-\cH(f_0)=-\int_0^t \cD(f_s)\dd s\quad\forall t\in[0,T].
\end{equation*}
\end{theorem}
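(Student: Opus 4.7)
The plan is to reduce the theorem to a chain rule for $\cH$ along curves solving the transport-collision rate equation \eqref{eq:TCRE-intro}, combined with the pointwise Fenchel--Young duality between $\Psi$ and $\Psi^{*}$. The backbone is the identity
\begin{equation*}
\cH(f_T) - \cH(f_0) \;=\; \tfrac14\int_0^T\!\!\int_{\R^{4d}\times S^{d-1}} \bar\nabla\log f_t\cdot U_t\,\dd\sigma\,\dd t,
\end{equation*}
which formally follows by differentiating $\cH(f_t)=\int f_t\log f_t$, inserting \eqref{eq:TCRE-intro}, and noting that the transport term $\int\log f\,(v\cdot\nabla_x f) = \int v\cdot\nabla_x(f\log f-f)$ vanishes by integration by parts while the collision term produces $\tfrac14\int\bar\nabla\log f\cdot U$ via the adjointness of $\bar\nabla\cdot$ and $\bar\nabla$. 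Making this rigorous will be the main technical step: I would regularize $\log$ by $\log(\cdot+\varepsilon)$, apply the chain rule to the smoothed entropy, and pass to the limit using finite initial entropy, the moment assumptions (which together with the exponential decay of $k$ control the transport and spatial tails), and the Young bound below to supply dominated convergence on the collision side.

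Once the chain rule is in hand, the inequality $\cL_T(f,U)\ge0$ follows from the pointwise bound $ab\le\Psi(a)+\Psi^{*}(b)$ (using that $\Psi$ is even) applied to $a=U/(\theta(f)Bk)$ and $b=-\bar\nabla\log f$: multiplying by $\theta(f)Bk$, integrating, and dividing by $4$ gives
\begin{equation*}
\cH(f_T)-\cH(f_0)\;\ge\;-\int_0^T\bigl[\cR(f_t,U_t)+\cD_{\Psi^{*}}(f_t)\bigr]\dd t.
\end{equation*}
For the equality case, $\cL_T=0$ forces equality a.e.\ in Young, equivalently $U/(\theta(f)Bk)=(\Psi^{*})'(-\bar\nabla\log f)$. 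Setting $s=ff_*$ and $r=f'f'_*$, so that $\log s-\log r=-\bar\nabla\log f$, the compatibility condition $(\Psi^{*})'(\log s-\log r)\theta(s,r)=s-r$ collapses this to $U=Bk(ff_*-f'f'_*)$. Inserting into \eqref{eq:TCRE-intro} and using the symmetries of $B,k$ and the involutivity of the collision map to compute $\bar\nabla\cdot U=-Q_{\sf fuz}(f)$ shows that $(f_t)$ weakly solves \eqref{FBE}. Conversely, for any such weak solution with finite $\int_0^T\cD_{\Psi^{*}}\dd t$, this same $U$ satisfies both the TCRE and the pointwise Young equality, so the chain rule forces $\cL_T=0$. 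In either case the algebraic identity $\cR(f,U)+\cD_{\Psi^{*}}(f)=\cD(f)$, itself a direct consequence of Young equality and the compatibility condition, converts the chain rule into the entropy identity $\cH(f_t)-\cH(f_0)=-\int_0^t\cD(f_s)\dd s$.

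The main obstacle is the chain rule itself: $\bar\nabla\log f$ is unbounded where $f$ vanishes, and the TCRE alone does not provide enough regularity on $U$ to justify termwise limits. The $\log(\cdot+\varepsilon)$ regularization, with the pointwise Young expression furnishing a dominating function, should suffice provided the right-hand side $\int_0^T(\cR+\cD_{\Psi^{*}})\dd t$ is finite (the only nontrivial case for the inequality). Additional care is needed for the spatial integrability of $\int v\cdot\nabla_x(f\log f-f)$, which rests on the stated moment bound of order $2$ in $x$, the $(2+\max(0,\gamma))$-moment in $v$, and the exponential tail of $k$ that localizes the collision integrals.
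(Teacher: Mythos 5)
Your overall architecture is exactly the paper's: a chain rule for $\cH$ along solutions of the transport-collision rate equation, combined with the Fenchel--Young inequality for $(\Psi,\Psi^{*})$ and the compatibility condition to identify the equality case with $U=Bk(ff_*-f'f_*')$, and the same algebraic identity converting the chain rule into the entropy identity. (One small omission there: equality in Young only identifies $U$ on the set $\{\theta(f)>0\}$; on $\{\theta(f)=0\}$, where $U=0$ automatically since $\cU\ll\nu_\mu$, you still need $G_{\Psi^{*}}(ff_*,f'f_*')=0$ together with the assumptions on $G_{\Psi^{*}}$ and $\theta$ to conclude $ff_*=f'f_*'=0$, hence $U=ff_*-f'f_*'$ everywhere; the paper treats this case explicitly.)

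The genuine gap is in the chain rule itself, which you correctly flag as the main technical step but propose to prove by regularizing only the logarithm, $\log(\cdot+\varepsilon)$. This cannot work as stated. The TCRE is a distributional identity that can only be tested against functions that are differentiable in $x$ with suitable integrability, whereas $\log(f_t+\varepsilon)$ is merely measurable: the transport pairing $\int \log(f_t+\varepsilon)\,v\cdot\nabla_x f_t$ and the integration by parts $\int v\cdot\nabla_x(f\log f-f)=0$ have no meaning for a general $L^1$ density, and the time-differentiation of $\int f_t\log(f_t+\varepsilon)$ is likewise unjustified since $\partial_t f_t$ exists only distributionally and $\mu_t$ is only weakly continuous. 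This is why the paper mollifies the curve itself in four layers: Gaussian convolution in $x$ and in $v$ (the latter exploiting the commutation of $v$-convolution with $T_\omega$ and the kernel stability $B*M_\beta\lesssim B$, $k*M_\alpha\lesssim k$ of Lemma~\ref{lem:kernel-convolve}, which yields the crucial stability of $\cR$ and $D_{\Psi^{*}}$ under mollification in Proposition~\ref{prop:potential-conv}), the addition of $\gamma g$ with $g$ solving the free transport equation so that $|\log f^{\mathrm{reg}}|$ grows only logarithmically, and convolution in time. Your sketch also misses a difficulty specific to the inhomogeneous setting: velocity mollification does \emph{not} commute with the transport operator $v\cdot\nabla_x$, producing the error measure $\mathsf E^{\alpha,\beta}$ in \eqref{eq:tcre-err}, whose contribution must be shown to vanish via the Fisher-information-type estimate bounding it by $(t-s)\sqrt{\beta/\alpha}$ (so $\beta\to0$ must be taken before $\alpha\to0$). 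Without smoothing in $x$, $v$, $t$ and the lower bound, the dominated-convergence scheme you invoke has nothing rigorous to dominate, so the proof of the central identity is missing rather than merely sketched.
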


The range of possible dual dissipation potentials includes two choices of particular interest. Choosing $\Psi(\xi)=\Psi^{*}(\xi)=\frac12 |\xi|^{2}$ and $\theta=\Lambda$ the logarithmic mean, we obtain the quadratic structure \eqref{eq:J-quad} related to the operator $\mathsf M$ from \eqref{eq:LM}. Another choice is
\[\theta(s,t)=\sqrt{st}\;,\qquad \Psi^{*}(\xi)=4\big(\cosh(\xi/2)-1\big)\;.\]
This particular variational structure seems to have been explicitly identified for the first time by Grmela, see e.g.~\cite[Eq. (A7)]{Grm93}.
For jump processes a similar structure is connected with the large deviations on the path level for the empirical measure of a growing number of independent particles, see e.g.~\cite{PRST20,PRS24}. Here, the resulting structure in the spatially homogeneous case can be related to the (only partially known) large deviation behaviour of the Kac particle system (see \cite{Hey23,BBBC22}) that gives rise to the homogeneous Boltzmann equation.

Let us briefly comment on the proof of Theorem \ref{thm:main-intro}. For the construction of the dual dissipation potentials we adapt the framework developed in \cite{PRST20} in the context of jump processes. The key to the variational characterisation of the fuzzy Boltzmann equation is a chain rule, giving the derivative of the entropy along solutions to the transport collision rate equation \eqref{eq:TCRE-intro} assuming only finiteness of the dissipation potentials, see Theorem \ref{thm:chainrule}. This requires a careful regularisation procedure. Here we follow and generalise the ideas developed in \cite{Erb23} for the homogeneous Boltzmann equation.

\subsection*{Organisation}
The rest of the paper is organised as follows. In Section \ref{sec:prelim} we collect some preliminaries. Existence and uniqueness of solutions to the fuzzy Boltzmann equation are discussed in Section \ref{analytic}. In Section \ref{sec:thm:2} we introduce the transport collision rate equation and dual dissipation potentials and establish the variational characterisation of the fuzzy Boltzmann equation. The appendix contains the formal verification of its GENERIC structure.

\subsection*{Acknowledgements}
Funded by the Deutsche Forschungsgemeinschaft (DFG, German Research Foundation) – Project-ID 317210226 – SFB 1283.

The authors are grateful for discussions with Helge Dietert and Andr\'e Schlichting.

\section{Preliminaries}\label{sec:prelim}

Let $d\geq 3$. Throughout the paper we make the following assumptions on collision kernels $B$ and $k$.

\begin{assumption}\label{CK}
The collision kernel $B:\R^d\times S^{d-1}\to \R_+$ is measurable, continuous w.r.t. the first variable, and invariant under the transformation \eqref{eq:pre-post} in the sense that 
   \begin{equation*}
\label{sys:ck}
    B(v-v_*,\omega)= B(v_*-v,\omega)=B(v'_*-v',\omega)\quad \forall v,v_*\in \R^d,~\omega\in S^{d-1}\;.
\end{equation*}
There existst $\mu\in (-\infty,1]$ and a constant $C_B>0$ such that 
\begin{equation*}
    C_B^{-1}\langle v\rangle ^\mu \le B(v,\omega)\le C_B\langle v\rangle ^\mu\quad \forall v\in \R^d,~\omega \in S^{d-1}\;.
\end{equation*}
The spatial kernel $k:\R^d\to \R_+$ is of the form
\begin{equation*}
k(z) = C\exp(-\gamma \jb{z})\;,
\end{equation*}
for some constants $\gamma,C>0$.
\end{assumption}

The following property will be crucial in the proof of the chain rule in Theorem \ref{thm:chainrule}. For $\beta>0$ we denote by $M_\beta$ the Gaussian density with variance $\beta$ on $\R^d$ given by
\[M_\beta(z) = (2\pi\beta)^{-d/2}\exp(-|z|^2/2\beta)\;.\]

\begin{lemma}\label{lem:kernel-convolve}
Under assumption \ref{CK} there exist a constant $C$ such that for any $\beta\in(0,1)$:
\begin{align}\label{eq:kernel-convolve-B}
B*_vM_\beta&\leq C\cdot B\;,\quad B^{-1}*_vM_\beta \leq C\cdot B^{-1}\;,\\\label{eq:kernel-convolve-k}
k*_x M_\beta &\leq C\cdot k\;, \quad k^{-1}*_xM_\beta\leq C\cdot k^{-1}\;.
\end{align}
\end{lemma}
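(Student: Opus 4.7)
The plan is to treat the velocity estimates \eqref{eq:kernel-convolve-B} and the position estimates \eqref{eq:kernel-convolve-k} separately, in each case reducing the convolution bound to a single pointwise "almost-multiplicativity" of the weight combined with uniform integrability of the Gaussian moments in $\beta \in (0,1)$.

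For \eqref{eq:kernel-convolve-B}, the first step is to reduce to the pure polynomial weight. Since Assumption~\ref{CK} gives $C_B^{-1} \jb{v}^{\mu} \le B(v,\omega) \le C_B \jb{v}^{\mu}$, it suffices to prove the claim for $w_\alpha(v) := \jb{v}^{\alpha}$ with $\alpha = \pm\mu$ (the constants then pick up a factor $C_B^2$, uniform in $\omega$). Next, I would invoke the Peetre-type inequality for the Japanese bracket,
\begin{equation*}
\jb{v-u}^{\alpha} \le 2^{|\alpha|/2}\, \jb{v}^{\alpha}\, \jb{u}^{|\alpha|}\qquad \forall\, v,u\in\R^d,\ \alpha\in\R,
\end{equation*}
which is a direct consequence of $1+|v-u|^2 \le 2(1+|v|^2)(1+|u|^2)$ together with the same inequality applied to $v = (v-u)+u$. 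Plugging this in and integrating against $M_\beta$ gives
\begin{equation*}
(w_\alpha *_v M_\beta)(v) \;\le\; 2^{|\alpha|/2}\, \jb{v}^{\alpha} \int_{\R^d} \jb{u}^{|\alpha|} M_\beta(u)\dd u.
\end{equation*}
Finally, the change of variables $u = \sqrt{\beta}\, w$ turns the remaining integral into $\int \jb{\sqrt{\beta}\, w}^{|\alpha|} M_1(w)\dd w$, which for $\beta \in (0,1)$ is monotone in $\beta$ and bounded by $\int \jb{w}^{|\alpha|} M_1(w)\dd w < \infty$. This yields both bounds in \eqref{eq:kernel-convolve-B} with a constant independent of $\beta$ and $\omega$.

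For \eqref{eq:kernel-convolve-k}, I would exploit the 1-Lipschitz property of the Japanese bracket: since $\nabla \jb{z} = z/\jb{z}$ has Euclidean norm at most $1$, we have $|\jb{z-u} - \jb{z}| \le |u|$ for all $z,u\in\R^d$. Consequently
\begin{equation*}
\exp\bigl(\mp \gamma \jb{z-u}\bigr) \;\le\; \exp\bigl(\mp \gamma \jb{z}\bigr)\, \exp(\gamma |u|),
\end{equation*}
so that
\begin{equation*}
(k^{\pm 1} *_x M_\beta)(z) \;\le\; k^{\pm 1}(z)\, \int_{\R^d} \exp(\gamma |u|)\, M_\beta(u)\dd u.
\end{equation*}
The integral on the right is uniformly controlled for $\beta \in (0,1)$ by the same rescaling $u = \sqrt{\beta}\, w$: it equals $\int \exp(\gamma \sqrt{\beta}\, |w|)\, M_1(w)\dd w \le \int \exp(\gamma |w|)\, M_1(w)\dd w < \infty$, which is the standard exponential moment of a standard Gaussian. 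This proves \eqref{eq:kernel-convolve-k}.

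No step here is delicate; the only point that requires care is to ensure that the constant can be taken independent of $\beta \in (0,1)$, which is precisely where the Gaussian scaling $u = \sqrt\beta w$ comes in and lets us dominate everything by the $\beta=1$ case. A small additional check is that the bound on $B^{-1}$ needs the lower pointwise bound $B \ge C_B^{-1}\jb{v}^\mu$ from Assumption~\ref{CK} to reduce $B^{-1}$ to $\jb{\cdot}^{-\mu}$; this is exactly why the assumption is stated with two-sided bounds.
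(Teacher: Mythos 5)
Your proof is correct and follows essentially the same route as the paper: Peetre's inequality $\jb{v-u}^{\pm\mu}\le 2^{\mu/2}\jb{v}^{\pm\mu}\jb{u}^{\mu}$ combined with uniform-in-$\beta$ Gaussian moments for the bounds on $B^{\pm1}$, and an additive control of the Japanese bracket under shifts together with the Gaussian exponential moment for $k^{\pm1}$. The only cosmetic difference is that you use the $1$-Lipschitz bound $|\jb{z-u}-\jb{z}|\le|u|$ where the paper uses the slightly cruder $|x|\le\jb{x}\le|x|+1$; both yield the same uniform constant for $\beta\in(0,1)$.
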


\begin{proof}
We use the fact that for any $p\in \R$ and $,y\in\R^d$
  \begin{equation*}
    \label{eq:petree}
    \frac{\jb{x}^p}{\jb{y}^p}\leq 2^{|p|/2}\jb{x-y}^{|p|}\;,
  \end{equation*}
  known as Peetre's inequality. It readily implies that 
    \[\big(\jb{\cdot}^p*M_\beta\big)(v) \leq C \jb{v}^p\;,\]
for a constant $C$ depending only on $|p|$ and $\int |v|^{|p|}M(v)\dd v$.
Together with Assumption \ref{CK} on $B$ we immediately infer the claim \eqref{eq:kernel-convolve-B}.

To obtain the claim for the spatial kernel $k$ we use that there is a constant $C>0$ such that for any $\alpha\in(0,1)$:
\[
\big(\exp(\pm\gamma\langle \cdot \rangle)* M_\alpha\big )(x)\le C \exp(\pm\gamma\langle x \rangle)\;.
 \]
Indeed, using that $|x|\le\langle x\rangle \le|x|+1$ we obtain
\begin{align*}
&\exp(\mp\gamma\langle x \rangle)\big(\exp(\pm\gamma\langle \cdot \rangle)* M_\alpha\big )(x)\\
&=\int_{\R^d}\exp(\mp\gamma\langle x\rangle\pm\gamma\langle \sqrt\alpha z+x \rangle-|z|^2)\dd z
\le \int_{\R^d}\exp\big( \gamma(1+\sqrt\alpha |z|)-|z|^2\big)\dd z\\
&\le \int_{\R^d}\exp\big( \gamma\jb{z}-|z|^2\big)\dd z=:C\;.
    \end{align*}
In light of Assumption \ref{CK} on $k$ we obtain \eqref{eq:kernel-convolve-k}.

  \end{proof}

For fixed $\omega\in S^{d-1}$ we will denote by 
\begin{equation}\label{eq:pre-post2}
T_\omega:\R^{2d}\to\R^{2d}\;,\quad  (v,\vs)\mapsto(\vp,\vsp)\;,
\end{equation}
the transformation between pre- and post-collisional velocities with $\vp,\vsp$ given by \eqref{eq:pre-post}. Note that $T_\omega$ is involutive and has unit Jacobian determinant. A crucial property of the Gaussian convolution in the velocity variable is that it commutes with the change of variables between pre- and post-collisional velocities. 

More precisely, let us write
\begin{align*}
  V=(v,\vs),\quad V'=(\vp,\vsp) = T_\omega V\;. 
\end{align*}
For a function $F:\R^d\times\R^d\to\R$ we set
\begin{align*}
  T_\omega F (V) := F(T_\omega V)\;.
\end{align*}
We will write for short $F*M_\beta=F*_VM_\beta=F*_{(v,\vs)}M_\beta$, the independent convolution in both variables with the Gaussian $M_\beta$.

Then we have the following commutation property. For all $F\in L^1(\R^{2d})$ and each $\omega\in
  S^{d-1}$ and $\beta > 0$:
  \begin{align}\label{eq:commutation-scaling-convolution}
   (T_\omega F)*M_\beta = T_\omega (F*M_\beta) \;.
  \end{align}
 If $F=f\fS$ we have for short $(\fP\fSP)*M_\beta=(f*M_\beta)'(f*M_\beta)'_*$.

We refer to \cite[Prop.~4] {TV99} or \cite[Lem.~2.3]{Erb23} for the short proof. Note that we trivially have $(T_\omega F)*_XM_\alpha= T_\omega(F*_XM_\alpha)$.

We denote by $\cP(\R^{2d})$ the space of Borel probability
measures on $\R^{2d}$ equipped with the topology of weak convergence in
duality with bounded continuous functions. 
For $\mu\in \cP(\R^{2d})$ and $p\geq1$ we set
\[\cE_{p,q}(\mu) := \int_{\R^{2d}}\jb{x}^p+\jb{v}^q\dd\mu(x,v)\;,\]
and denote by $\cP_{p,q}(\R^{2d})$ the set of $\mu\in\cP(\R^{2d})$ such that $\cE_{p,q}(\mu)<\infty$.

We denote by $\cH(\mu)$ the
\emph{Boltzmann--Shannon entropy} defined for $\mu\in\cP(\R^{2d})$ by
\begin{align*}
  \cH(\mu) = \int f\log f \dd \cL\;,
\end{align*}
provided $\mu=f\cL$ is absolutely continuous w.r.t.~Lebesgue measure
$\cL$ and $\max(f\log f,0)$ is integrable, otherwise we set
$\cH(\mu)=+\infty$. We will also write $\cH(f)$ if $\mu=f\cL$.

The relative entropy of $\mu\in \cP(\R^{2d})$ w.r.t.~another probability measure $\nu\in\cP(\R^{2d})$ defined by 
\begin{align*}
  \cH(\mu|\nu) = \int f \log f\dd \nu\;. 
\end{align*}
provided $\mu=f\nu$ and $+\infty$ otherwise. By Jensen's inequality we have $\cH(\cdot|\nu)\geq 0$. Let us denote by $\mathsf m$ the standard Gaussian measure on $\R^{2d}$, i.e.~$\dd\mathsf m (x,v)=M_1(x)M_1(v)\dd x\dd v$. For any $\mu\in \cP_{2,2}(\R^{2d})$ we have
\begin{align*}
  \cH(\mu) = \cH(\mu|\mathsf m) - \frac12\mathcal E_{2,2}(\mu) -\frac{d}{2}\log(2\pi)\;.
\end{align*}
 Hence, we see that $\cH>-\infty$ on $\cP_{2,2}(\R^{2d})$.  Finally, we note that $\cH$ is lower semicontinuous w.r.t.~weak convergence plus convergence of second moments. I.e.~if $\mu_n\to \mu$ weakly in $\cP(\R^{2d}$ and $\cE_{2,2}(\mu_n)\to\cE_{2,2}(\mu)$ as $n\to\infty$, we have that 
 \[\cH(\mu)\leq \liminf_n \cH(\mu_n)\;.\]
This follows from the fact that $\cH(\cdot|\cdot)$ is lower semicontinuous w.r.t.~weak convergence.

\section{Solvability of the fuzzy Boltzmann equation}\label{analytic}

In this section, we study the solvability of the fuzzy Boltzmann equations \eqref{FBE}. We use the following notion of weak solution. Here and in the sequel we will denote the collision operator of the fuzzy Boltzmann equation by $Q$ instead of $Q_{\sf fuz}$ as in the introduction.

\begin{definition}[Weak solution]
For any given initial value $f_0\in L^1(\Do)$, we say $f_t(x,v)\in C([0,T]; L^1(\Do))$  is a weak solution of \eqref{FBE}, if
\begin{equation*}
\label{weakformulation}
    \begin{aligned}
    &\int_{\Do}\varphi(0) f_0\dd x\dd v-\int_0^T\int_{\Do}(\d_t+v\cdot\nabla_x)\varphi f_t\dd x\dd v\dd t \\
    &=-\frac14\int_0^T\int_{\G} \bar\nabla\varphi \big(f_t'(f_t)_*'-f_t(f_t)_*\big) Bk \dd\sigma \dd t
    \end{aligned}
\end{equation*}
for all $\varphi\in C^\infty_c([0,T)\times\Do)$ and $\dd\sigma=\dd x\dd x_*\dd v\dd v_*\dd\omega$ denotes the Hausdorff measure on $\R^{4d}\times S^{d-1}$.
\end{definition}
As a consequence of the symmetric assumption in Assumption~\ref{CK}, we have 
\begin{equation}
\label{sys}
\int_{\Do}Q(f_t)\varphi\dd x\dd v=-\frac14 \int_{\G} \bar\nabla\varphi \big(f_t'(f_t)_*'-f_t(f_t)_*\big) Bk\dd\sigma.
\end{equation}

For any fixed $p,\,q\ge 1$, we denote 
\begin{align*}
\|f\|_{L^1_{p,0}}:=\|\langle x\rangle ^pf\|_{L^1(\Do)}\quad\text{and}\quad \|f\|_{L^1_{0,q}}:=\|\langle v\rangle ^qf\|_{L^1(\Do)}.
\end{align*}
The functional space $L^1_{p,q}(\Do)$ consists all the functions such that $\|f\|_{L^1_{p,0}}+\|f\|_{L^1_{0,q}}<\infty$.

We have the following existence, uniqueness, and entropy dissipation results for the solutions of the Cauchy problem for the fuzzy Boltzmann equation \eqref{FBE}.

\begin{theorem}
\label{thm:1}
Let the collision kernel $B$ satisfy Assumption~\ref{CK}.
\begin{itemize}
    \item {\bf In the case of $\mu\in(-\infty,0]$:} If $f_0\in L^1(\Do;\R_+)$, then there exists a { unique} global-in-time weak solution $f_t\in C^1([0,T];L^1(\Do;\R_+)$ for any $T>0$ of \eqref{FBE}. The mass $\|f_t\|_{L^1(\Do)}$ is conserved.

   Furthermore, if  $f_0\in L^1_{0,2}(\Do)$, then the following energy and momentum conservation laws hold
    \begin{equation}
    \label{cl}
\int_{\Do}(v,|v|^2)f_t\dd x\dd v=\int_{\Do}(v,|v|^2)f_0\dd x\dd v\quad\forall t\in[0,T];
\end{equation}
If  $f_0\in L^1_{2,2}(\Do)$, then we have
    \begin{equation}
    \label{clx}
    \| f_t\|_{L^1_{2,0}(\Do)}\lesssim_T \|f_0\|_{L^1_{2,2}(\Do)} \quad\forall t\in[0,T];
\end{equation}

If $f_0\in L^1_{2,2}(\Do)$ and $\cH(f_0):=\int_{\Do} f_0\log f_0<\infty$, then we have the following entropy inequality
\begin{equation}
\label{in:cH}
    \cH(f_t)-\cH(f_0)+\int_0^t \cD(f_s)\dd s\le0\quad \forall t\in[0,T],
\end{equation}
and the bounded entropy dissipation
\begin{equation}
\label{bdd:D:0}
   \int_0^T \cD(f_s)\dd s<\infty,
\end{equation}
where the entropy dissipation functional is defined in \eqref{eq:def-dissipation}.

 \item  {\bf In the case of $\mu\in(0,1]$:}  If $f_0\in L^1_{2,2+\mu}(\Do)$ and  $\cH(f_0)<\infty$, 
    then there exists a unique global-in-time weak solution $f\in C^1([0,T];L^1(\Do))\cap L^\infty([0,T];L^1_{2,2+\mu}(\Do))$ for any $T>0$ of \eqref{FBE} such that the following conservation laws holds
    \begin{equation*}
\int_{\Do}(1,v,|v|^2)f_t\dd x\dd v=\int_{\Do}(1,v,|v|^2)f_0\dd x\dd v,
\end{equation*}  
the estimate  \eqref{clx} holds, we have
\begin{align}  
&\|f_t\|_{L^\infty([0,T];L^1_{0,2+\mu}(\Do))}\lesssim\|f_0\|_{L^1_{0,2+\mu}(\Do)}\exp\big(C_T\|f_0\|_{L^1_{0,2}(\Do)}\big),
\label{unbd:lem:2+mu}
\end{align} 
and the entropy inequality \eqref{in:cH} holds. In particular, the entropy dissipation is bounded \eqref{bdd:D:0}.



\end{itemize}
\end{theorem}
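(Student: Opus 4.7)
Following the scheme of Arkeryd \cite{Ark72}, I would construct solutions by truncation. Setting $B_n := B \wedge n$, I solve the resulting equation by Banach fixed point in $C([0,T];L^1(\Do))$: the truncated operator $Q_n$ is globally Lipschitz on bounded $L^1$-sets because $\|Q_n(f)-Q_n(g)\|_{L^1} \lesssim n\|k\|_{L^1}(\|f\|_{L^1}+\|g\|_{L^1})\|f-g\|_{L^1}$, and the transport $v\cdot\nabla_x$ generates an $L^1$-isometry. This yields a unique nonnegative mild solution $f^n$, global by mass conservation, which itself is obtained by testing the weak formulation with $\varphi \equiv 1$ and exploiting the symmetries of $B, k$ in Assumption~\ref{CK} together with the unit Jacobian of $T_\omega$.

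\textbf{A priori estimates.} Testing with $v$ and $|v|^2$ gives \eqref{cl} (using $v+\vs = v'+\vsp$ and $|v|^2+|\vs|^2 = |v'|^2+|\vsp|^2$). Testing with $\jb{x}^2$ makes $\bar\nabla\varphi \equiv 0$, so the collision term vanishes identically and Cauchy--Schwarz applied to the transport contribution, combined with the energy bound, yields \eqref{clx} by Gronwall. For $\mu \in (0,1]$, propagating the $\jb{v}^{2+\mu}$ moment requires a Povzner-type inequality on $\int_{S^{d-1}}[\jb{v'}^{2+\mu}+\jb{\vsp}^{2+\mu}-\jb{v}^{2+\mu}-\jb{\vs}^{2+\mu}]\dd\omega$ producing a good negative term that dominates the $\jb{v-\vs}^{\mu}$ growth of $B$, giving \eqref{unbd:lem:2+mu} via Gronwall. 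For the entropy, the four-fold symmetrisation $(v,\vs) \leftrightarrow (v',\vsp)$ combined with the $(x,x_*)$ swap identifies $\int Q_n(f^n)\log f^n \,\dd x\,\dd v = -\cD_n(f^n)$; justifying $\log f^n$ as a test function by regularisation $\log(f^n+\varepsilon)$ and using the second moments to control $\int f^n|\log\jb{v}|$ yields the truncated entropy inequality $\cH(f^n_t) + \int_0^t \cD_n(f^n_s)\,\dd s \leq \cH(f_0)$.

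\textbf{Passage to the limit, uniqueness, and main obstacle.} The entropy bound combined with the moment bounds gives equi-integrability of $(f^n)$ by de la Vall\'ee--Poussin, while the PDE itself provides equicontinuity in time; up to a subsequence $f^n \to f$ in $C([0,T];L^1_{\mathrm{weak}})$, and since $B_n \uparrow B$ with the moment bounds dominating the collision integrand, one passes to the limit in the weak formulation. Lower semicontinuity of $\cH$ and Fatou's lemma applied to $\cD_n \nearrow \cD$ then transfer \eqref{in:cH} and hence \eqref{bdd:D:0} to $f$. For uniqueness, writing $h := f-g$ and decomposing $f'f'_*-g'g'_* = h'f'_*+g'h'_*$ (and analogously for the loss term), testing against $\sgn(h)$ (justified by approximation) produces $\frac{d}{dt}\|h_t\|_{L^1} \lesssim \big(1+\|f_t\|_{L^1_{0,2+\mu_+}}+\|g_t\|_{L^1_{0,2+\mu_+}}\big)\|h_t\|_{L^1}$ with $\mu_+ := \max(\mu,0)$, and Gronwall concludes. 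The principal technical difficulty is the case $\mu > 0$: the Povzner inequality for $\jb{v}^{2+\mu}$ is essential both to pass to the limit in $Q_n(f^n)$ and to close the uniqueness Gronwall, since the entropy dissipation alone cannot preclude mass from escaping to infinite velocities. In contrast, the spatial kernel $k$ enters only as a harmless multiplicative factor throughout, so the spatial inhomogeneity poses no substantially new difficulty beyond those of the homogeneous equation.
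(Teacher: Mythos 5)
Your overall architecture for $\mu\in(0,1]$ (truncate the kernel, derive uniform moment and entropy bounds, pass to the limit by weak $L^1$ compactness and lower semicontinuity of $\cH$ and $\cD$) matches the paper's, but two of your key steps do not hold as stated. First, the uniqueness estimate is wrong for $\mu>0$: testing the difference equation for $h=f-g$ against $\sgn(h)$ produces gain and loss terms of the form $\int |h|\,\jb{v}^{\mu}\jb{\vs}^{\mu}(f_*+g_*)\,k$, because $B\asymp \jb{v-\vs}^{\mu}$ necessarily places a weight $\jb{v}^{\mu}$ on the factor $|h|$ itself; the right-hand side is therefore controlled by $\|h\|_{L^1_{0,\mu}}$ (and, if you instead run the Gronwall in $\|h\|_{L^1_{0,2}}$, by $\|h\|_{L^1_{0,2+\mu}}$ coming from $\jb{\vp}^2,\jb{\vsp}^2$ in the gain terms), not by $\|h_t\|_{L^1}$ times moments of $f$ and $g$. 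The claimed inequality $\frac{\dd}{\dd t}\|h_t\|_{L^1}\lesssim\big(1+\|f_t\|_{L^1_{0,2+\mu_+}}+\|g_t\|_{L^1_{0,2+\mu_+}}\big)\|h_t\|_{L^1}$ does not follow, so the Gronwall argument does not close as written; this is exactly the classical hard-potential difficulty, and the ``good negative term'' of the Povzner inequality does not repair it, since that term controls moment growth of a single solution, not the difference of two solutions. (For $\mu\le 0$ your estimate is fine and coincides with the paper's.)

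Second, in the construction of approximate solutions you assert that the Banach fixed point ``yields a unique nonnegative mild solution, global by mass conservation''. Nonnegativity is not a consequence of the contraction argument: the quadratic map $f\mapsto Q_n(f)$ does not preserve the positive cone, and without $f\ge0$ mass conservation only fixes $\int f$, not $\|f\|_{L^1}$, so globality is also unjustified. This is precisely why Arkeryd—and the paper—work with the shifted operator $\bar Q(f)=Q(f)+cf\int f$ and the damping $c_0 f$ on the left, running a monotone, order-preserving iteration along characteristics: positivity and monotonicity are what make the scheme converge (Levi) and keep $\|f_t\|_{L^1}$ equal to the conserved mass. Relatedly, your route to the entropy inequality is too light: testing with $\log(f^n+\varepsilon)$ and invoking second moments does not control the negative parts of $f\log f$ and of the production term as $\varepsilon\to0$; the paper manufactures a pointwise Gaussian-type lower bound on the approximations (the three-parameter scheme $f^{ijk}$, with $k^{-1}(\jb{x}+\jb{v})^{-a}$ added to the initial datum and $\bar Q$ truncated) precisely so that $\log f$ is an admissible test function and $f\log^- f$ is dominated. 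Finally, passing to the limit in $Q_n(f^n)$ with only weak $L^1$ convergence of $f^n$ is insufficient for the quadratic terms $f^n f^n_*$; the paper's monotone construction yields strong $L^1$ convergence for exactly this reason.
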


As part of the variational characterisation of solutions to the fuzzy Boltzmann equation in Theorem \ref{thm:main} below, we will show that the weak solution obtained above for initial data $f_0\in L^1_{2,2+\max(\mu,0)}(\Do)$ with $\cH(f_0)<\infty$ in fact satisfies the following entropy identity:
\begin{equation*}
\label{cH}
    \cH(f_t)-\cH(f_0)=-\int_0^t \cD(f_s)\dd s\quad\forall t\in[0,T].
\end{equation*}

We observe that the fuzzy collision term has the following priori bound
\begin{equation*}
    \label{PE}
\|Q(f)\|_{L^1}\le\|k\|_{L^\infty}\|B\|_{L^\infty}\|f\|_{L^1}^2.
\end{equation*}
As a consequence, the fuzzy Boltzmann equation \eqref{FBE} is structurally closer to the {homogeneous} Boltzmann equation 
\eqref{HBE}. Additionally, the transportation term $v\cdot\nabla_x f_t$ in the fuzzy Boltzmann equation can be handled by considering \eqref{FBE} along characteristic lines  
\begin{equation}
\label{FBE:charac}
\left\{
\begin{aligned}
&\d_t[f_t]=[Q(f_t)], \\
&[f_t]|_{t=0}=f_0,
\end{aligned}
\right.
\end{equation}
where we define the notation  
\begin{equation*}
[f_t](x,v):=f_t(x(t),v),\quad  x(t)=x+vt.
\end{equation*}

Based on the above observation, we can derive the existence, uniqueness and entropy results of fuzzy Boltzmann equation \eqref{FBE} by paralleling the arguments for homogeneous Boltzmann equations \eqref{HBE} in \cite{Ark72}. For completeness, we will sketch the proof here.

The reason why we split the cases $\mu\in(-\infty,0]$ and $\mu\in(0,1]$ is following:
\begin{align*}
0\le B(v,w) \le C_B \quad&\text{uniformly bounded, if }\mu\in (-\infty,0],\\
0\le B(v,w) \le C_B\langle v\rangle^\mu\quad&\text{polynomially growing, if }\mu\in (0,1].
\end{align*}
We will first treat the simpler case that the collision kernel is bounded in Section~\ref{bd}. Based on the bounded collision kernel results, we show the unbounded collision kernel case in Section~\ref{unbdd}.

  In the following, we fix an arbitrary time $T>0$. Without loss of generality, we assume the space kernel $0\le k\le 1$.

\subsection{Existence \& uniqueness of \texorpdfstring{$\mu\in(-\infty,0]$}{TEXT} case}\label{bd}

\medskip

{\it Existence.} 

We will first construct a sequence of approximate solutions, and then pass to the limit.

We consider the following approximation system with $f^1\equiv0$ 
\begin{equation}
\label{app}
\left\{
\begin{aligned}
&\d_t f^{n+1}+v\cdot \nabla_x f^{n+1}+c_0f^{n+1}=\bar Q(f^{n}),\\
&f^{n+1}|_{t=0}=f_0,\quad n\ge 1,
\end{aligned}
\right.
\end{equation}
where 
$c_0:=c\|f_0\|_{L^1(\Do)}$, and $\bar Q$ is defined as
\begin{equation}
\label{bar:Q}
    \begin{aligned}
   \bar  Q(f)= Q(f)+cf\int_{\Do}f.
    \end{aligned}
\end{equation}
The constant $c>0$ will be chosen large enough to ensure the positivity and monotonicity for $\bar Q$. 

The approximation system \eqref{app} on the characteristic lines reads as
\begin{equation}
\label{app:charac}
\left\{
\begin{aligned}
&\d_t [f^{n+1}]+c_0[f^{n+1}]=[\bar Q(f^{n})],\\
&[f^{n+1}]|_{t=0}=f_0,
\end{aligned}
\right.
\end{equation}
and the corresponding solutions are given by  
\begin{equation}
    \label{Duhamel}
[f^{n+1}_t]=e^{-c_0t}f_0+\int_0^{t}e^{-c_0(t-s)} [\bar Q(f^{n}_s)]\dd s.
\end{equation}

Based on the above observations, we have the following  properties of $f_t^n$:
\begin{itemize}
    \item \textit{Positivity $f^{n+1}\ge0$.}
    We use induction and assume  $[f^{n}]\ge 0$, then by definition
    \begin{align*}
        [\bar Q(f^{n})]
        &\ge c[f^{n}]\|f^{n}\|_{L^\infty_tL^1}+[Q(f^{n})]\\
        &\ge c[f^{n}]\|f^{n}\|_{L^\infty_tL^1}-[\int_{\Do\times S^{d-1}}f^{n}f^{n}_*Bk]\\
        &\ge (c-C_B)[f^{n}]\|f^{n}\|_{L^\infty_tL^1}>0,
    \end{align*}
     where we choose $c=2C_B$. Substituting $[\bar Q(f^{n})]\ge0$ to \eqref{Duhamel} implies the positivity of $f^{n+1}$.

\item \textit{Monotocity $f^{n+1}\ge f^n$.}
According to \eqref{Duhamel},  we only need to prove the monotonicity of $[\bar Q(\cdot)]$ (equivalently $\bar Q(\cdot)$), i.e. 
\begin{equation*}
0\le h\le g\Rightarrow  \bar Q(h)\le  \bar Q(g)\quad\forall (x,v)\in\Do.
\end{equation*}
Indeed, 
    \begin{align*}
        &\Big(\bar Q(g)-\bar Q(h)\Big)(x,v)\\
        &=\int_{\Do\times S^{d-1}} \big(g'(g-h)_*'+(g-h)'h_*'\big)Bk\dd x_*\dd v_*\dd w\\
        &\quad+\int_{\Do}
        \big((g-h)_*g+(g-h)h_*\big)\big( c-Bk\big)\dd x_*\dd v_*\dd w,
    \end{align*}
    which is positive since  $c=2C_B$.

    \item \textit{Boundedness of $L^1$-norm $\|f^n_t\|_{L^1(\Do)}\le \|f_0\|_{L^1(\Do)}$.} 
 We observe
    \begin{equation*}
\|f\|_{L^1}=\|[f]\|_{L^1}.  
    \end{equation*}
   We use induction and assume  $\|f^{n}_t\|_{L^\infty_tL^1}\le \|f_0\|_{L^1}$, then we integrate \eqref{app:charac} over $[0,t]\times \Do$ to derive 
    \begin{equation*}
    \begin{aligned} 
    &\|f^{n+1}_t\|_{L^1}=\|f_0\|_{L^1}+\underbrace{\int_0^t\int_{\Do}[ Q(f^{n}_s)]}_{=0}\\
    &+c\int_0^t\big(\int_{\Do} f^{n}_s\big)^2-\|f_0\|_{L^1}\int_{\Do}f^{n+1}_s\\
        &\le  \|f_0\|_{L^1}+c\int_0^t\int_{\Do} \underbrace{f^{n}_s-f^{n+1}_s}_{\le 0}.
    \end{aligned}
    \end{equation*}
    
\end{itemize}

By directly applying Levi's lemma (see e.g. \cite{Ark72}), $f^{n}_t$ converges to some $f_t$ in $L^1(\Do)$ for all $t\in[0,T]$. We use the dominated convergence theorem to pass to the limit ($n\to\infty$) in the  weak formulation of \eqref{app:K} to derive 
\begin{equation*}
    \label{weak:fk:1}
    \begin{aligned}
   & \int_{\Do} f_0 \varphi(0)-\int_0^T\int_{\Do} f_t (\d_t+v\cdot\nabla_x)\varphi +c_0\int_0^T\int_{\Do} f_t \varphi\\ 
   &=c\|f_t\|_{L^1(\Do)}\int_0^T\int_{\Do} f_t \varphi+\int_0^T\int_{\Do} Q(f_t) \varphi\quad\forall \varphi\in C^\infty_c([0,T)\times\Do).
\end{aligned}
\end{equation*}
 Now to verify $f_t$ is indeed a weak solution of \eqref{FBE}, we only need to show $\|f_t\|_{L^1(\Do)}=\|f_0\|_{L^1(\Do)}$ for all $t\in[0,T]$. Indeed, we integrate \eqref{app} over $\Do$ and pass to the limit ($n\to\infty$)  
\begin{equation*}
\begin{aligned}
\frac{\dd}{\dd t}\| f_t\|_{L^1} +c\|f_0\|_{L^1}\|f_t\|_{L^1}=c\|f_t\|_{L^1}^2.
\end{aligned}    
\end{equation*}
Since $0\le \|f_t\|_{L^1}\le \|f_0\|_{L^1}$, the above ODE has the only solution  $\|f_t\|_{L^1}=\|f_0\|_{L^1}$ for all $t\in[0,T]$. 

The Lipschitz continuity in time can be shown via
\begin{equation*}
\label{lip:t}
\|f_t-f_s\|_{L^1}\le|\int_s^t \int_{\Do}Q(f_r)\dd x\dd v\dd r |\le C|t-s|\|f_0\|_{L^1}^2.   
\end{equation*}

\medskip

{\it  Uniqueness.} 

Let $f_t,g_t\in L^\infty([0,T];L^1(\Do))$ both be the solutions of \eqref{FBE:charac} with the same initial value $f_0\in L^1(\Do)$. We define the difference $\dot f_t=f_t-g_t$, which satisfies
\begin{equation}
\label{diff}
\left\{
\begin{aligned}
&\d_t [\dot f_t]=[\int_{\Do\times S^{d-1}}Bk(f_t'\dot {(f_t)'_*}+\dot f_t'(g_t)_*'-\dot{(f_t)}_*f_t-\dot{f_t} (g_t)_*)\dd v_*\dd x_*\dd w]\\
&\dot{f_t}|_{t=0}=0.
\end{aligned}
\right.
\end{equation}

We multiply \eqref{diff} by $\sgn(\dot f_t)$ and integrate over $\Do$ to derive
\begin{align*}
    \frac{\dd}{\dd t}\int_{\Do} |\dot{f_t}|\dd x\dd v&\le  C_B (\|f_t\|_{L^1}+\|g_t\|_{L^1})\|\dot{f_t}\|_{L^1}\le  2C_B\|f_0\|_{L^1}\|\dot{f_t}\|_{L^1}.
\end{align*}
Gronwall's inequality implies then $\dot{f_t}=0$.

\medskip
{\it  Conservation laws and propagation of regularities.}

We first observe that 
\begin{equation*}
   \bar\nabla v=\bar\nabla|v|^2=0.
\end{equation*}
We recall the symmetric identity \eqref{sys}
\begin{equation*}
 \int_{\Do}Q(f)\varphi\dd x\dd v=-\frac14 \int_{\G} \bar\nabla\varphi (f'f_*'-ff_*) Bk\dd\sigma.
\end{equation*}
We take $\varphi=v,\,|v|^2$, then at least formally the conservation laws \eqref{cl} hold. 

To be rigorous, it suffices to verify $f_t\in L^1_{0,2}(\Do)$. Indeed, we multiply the approximation system \eqref{app} by $|v\wedge k|^2$, $k\in\N_+$ and integrate over $[0,t]\times\Do$. By repeating the similar induction argument for showing $\|f^n_t\|_{L^1}\le \|f_0\|_{L^1}$, one has $\|f^n_t\|_{L^1_{0,2}}\le \|f_0\|_{L^1_{0,2}}$. The monotonicity of $f^n_t$ ensures $\|f_t\|_{L^1_{0,2}}\le \|f_0\|_{L^1_{0,2}}$, then  the conservation laws \eqref{cl} follow. 

Concerning the bound \eqref{clx}, we multiply the fuzzy Boltzmann equation \eqref{FBE} by $\langle x\wedge k\rangle^2$ and integrate over $\Do$ to derive 
\begin{align*}
    \frac{\dd}{\dd t}\|\langle x\wedge k\rangle^2 f_t\|_{L^1}\le C(\|\langle x\wedge k \rangle^2 f_t\|_{L^1}+\|f_t\|_{L^1_{0,2}}),
\end{align*}
where we use the fact $\bar\nabla\langle x\wedge k \rangle^2=0$ (this is generally not true for $\bar\nabla| v\wedge k|^2$). Gronwall's inequality implies 
\begin{equation*}
    \|\langle x\wedge k\rangle^2 f_t\|_{L^1}\le e^{Ct}\|f_0\|_{L^1_{2,0}}+\int_0^t e^{C(t-s)}\|f_s\|_{L^1_{0,2})} \,ds\quad\forall t\in[0,T].
\end{equation*}
Then \eqref{clx} can be derived by Fatou's lemma.

\medskip

{\it Entropy inequality.} 

To show the entropy inequality \eqref{in:cH}, we construct an approximation sequence $f_t^{ijk}$  and verify the dissipation of the corresponding entropy, then pass to the limit.

Let $f_t^{ijk}$ with $i,j,k\in\N_+$ be a solution of 
\begin{equation}
\label{app:ijk}
\left\{
\begin{aligned}
&\d_t [f_t^{ijk}]+c_j[f_t^{ijk}]=[\bar Q^i(f_t^{ijk})],\\
&f^{ijk}_t|_{t=0}=f_0^{ik}:=\min(f_0+k^{-1}(\langle x\rangle+\langle v\rangle )^{-a},i)
\end{aligned}
\right.
\end{equation}
where $a>2d+2$ and 
\begin{align*}
     c_j:=c\int_{\Do}f_0+j^{-1}(\langle x\rangle+\langle v\rangle )^{-a}\dd x\dd v,\quad\bar Q^i(f):=\min(\bar Q(f),i).
\end{align*}
In the above, the constant $c>0$ and $\bar Q(f)$ are defined as in \eqref{bar:Q}, where $c$ is large enough such that $\bar Q$ is positive and monotonic.

We note that the approximation solutions $f^{ijk}_t$  are given by  \begin{equation}
    \label{Duhamel:ijk}
[f^{ijk}_t]=e^{-c_jt}f_0^{ik}+\int_0^{t}e^{-c_j(t-s)} [\bar Q^i(f^{ijk}_s)]\dd s,
\end{equation}
then the positivity and boundedness of $\bar Q^i$ ensure that
\begin{equation}
\label{bounds:ijk}
 C_T(k)\exp(-\langle x\rangle^2-\langle v\rangle^2 )\le f^{ijk}\le C_T(i)\quad\forall \,i,\,j,\,k\in\N_+.
\end{equation}
We observe that $\bar Q^i$ and $f^{i,k}_0$ are monotonically increasing as $i\to\infty$, and $c_j$ and  $f^{i,k}_0$ are monotonically decreasing as $j,k\to\infty$. By \eqref{Duhamel:ijk}, we have  
\begin{equation}
\label{monoton:ijk}
    f^{i_1j_1k_1}\le f^{i_2j_2k_2}\quad \forall \,i_1\le i_2,\,j_1\le j_2,\, k_1\ge k_2.
\end{equation}

Notice that $f^{jj\infty}\in L^1_{2,2}(\Do)$ is also a solution of the fuzzy Boltzmann equation \eqref{FBE} with the initial value $f_0+j^{-1}(\langle x\rangle+\langle v\rangle)^{-a}$. The monotonicity \eqref{monoton:ijk} ensures 
\begin{equation*}
  f^{ijk}\le f^{jj\infty}\quad\forall\,j\le k,
\end{equation*}
which implies that $\|f^{ijk}\|_{ L^1_{2,2}}\le \|f^{jj\infty}\|_{ L^1_{2,2}}$ for all $j\le k$.  

Then the moments estimates and bounds \eqref{bounds:ijk} allow us to multiply \eqref{app:ijk} by $[\log f^{i,j,k}]$ and integrate over $\Do$ to derive the approximation entropy inequality
\begin{equation}
\label{cH:ijk}
\begin{aligned}
&\cH(f^{ijk}_t)-\cH(f^{ik}_0)\le \int_0^t\int_{\Do}\Big(\bar Q(f^{ijk}_s)-c_j f^{ijk}_s\Big)\log f^{ijk}_s\\
&+\int_0^t\int_{\Do}\bar Q(f^{ijk}_s)-c_j f^{ijk}_s+\int_0^t\int_{\bar Q(f^{ijk}_s)>i}\Big(i-\bar Q(f^{ijk}_s)\Big)\log^- f^{ijk}_s,
\end{aligned}
\end{equation}
By using   $0\le f^{ijk}\le f^{jj\infty}$ ($\forall\, j\le k$) and $\|f^{jj\infty}_t\|_{L^1}=\|f^{jj\infty}_0\|_{L^1}$, the right-hand side of \eqref{cH:ijk} has the following bounds
\begin{align*}
&\int_0^t\int_{\Do}\Big(\bar Q(f^{ijk}_s)-c_j f^{ijk}_s\Big)\log f^{ijk}_s\\
&\le -\int_0^t\cD(f^{ijk}_s)+c\int_0^t\int_{\Do}f^{ijk}_s\log^- f^{ijk}_s\Big(\int_{\Do}f^{ijk}_s-f^{jj\infty}_s\Big),\\
&\int_0^t\int_{\Do}\bar Q(f^{ijk}_s)-c_j f^{ijk}_s\le c\int_0^t\int_{\Do}f^{ijk}_s\Big(\int_{\Do}f^{ijk}_s-f^{jj\infty}_s\Big)\le0,
\end{align*}
and 
\begin{align*}
&\int_0^t\int_{\bar Q(f^{ijk}_s)>i}\Big(i-\bar Q(f^{ijk}_s)\Big)\log^- f^{ijk}_s\\
&\le C_T(k)\int_0^t\int_{\bar Q(f^{ij\infty}_s)>i}\Big(\bar Q(f^{jj\infty})-i\Big)(\langle x\rangle^2 +\langle v\rangle^2)\to 0
\end{align*}
as $i\to \infty$, where we use the lower bound \eqref{bounds:ijk} in the last inequality. 

Then we conclude
\begin{equation*}
\label{cH:ijk:1}
\begin{aligned}
&\cH(f^{ijk}_t)-\cH(f^{ik}_0)+\int_0^t\cD(f^{ijk}_s)\dd s\\
&\le C_T(k)o(1)+c\int_0^t\int_{\Do}f^{ijk}_s\log^- f^{ijk}_s\Big(\int_{\Do}f^{ijk}_s-f^{jj\infty}_s\Big). 
\end{aligned}
\end{equation*}

We first consider $i\to\infty$. Since $f^{ijk}$ is monotonically increasing and has the following bounds
\begin{equation*}
    C_T(m)\exp(-\langle x\rangle^2-\langle v\rangle^2)\le f^{j,k}\le f^{jj\infty},
\end{equation*}
we denote $f^{jk}_t=\lim_{i\to\infty}f^{ijk}_t$ in $L^1(\Do)$(up to a subsequence) for all $t\in[0,T]$.
By Fatou's lemma and dominated convergence Theorem, one has $f^{jk}\in L^1_{2,2}(\Do)$, $f^{jk}\log f^{jk}\in L^1(\Do)$ and
\begin{equation}
    \label{cH:jk}
    \begin{aligned}
 &\cH(f^{jk}_t)-\cH(f^{jk}_0)+\int_0^t\cD(f^{jk}_s)\dd s\le c\int_0^t\int_{\Do}f^{jk}_s\log^- f^{jk}_s\Big(\int_{\Do}f^{jk}_s-f^{jj\infty}_s\Big),    
\end{aligned}
\end{equation}
where we use the weak lower semicontinuity $\cD$ as in Lemma~\ref{lem:lsc-diss}. 

Similarly, we denote $f^j:=\lim_{k\to\infty}f^{jk}$ in $L^1(\Do)$, where $f^j$ is monotonically decreasing and bounded by $f^{11\infty}$. By using 
    $x\log x\ge -y+x\log y$ for all  $x,y>0$, one has 
\begin{equation*}
    0\ge f^j\log^-f^j\ge -\exp(-(\langle x\rangle^2+\langle v\rangle^2))-(\langle x\rangle^2+\langle v\rangle^2)f^j\in L^1(\Do).
\end{equation*}
Then we pass to the limit in \eqref{cH:jk} to derive
\begin{equation}
\label{ent:j}  
\begin{aligned}
 &\cH(f^{j}_t)-\cH(f^{j}_0)+\int_0^t\cD(f^{j}_s)\dd s\le c\int_0^t\int_{\Do}f^{j}_s\log^- f^{j}_s\Big(\int_{\Do}f^{j}_s-f^{jj\infty}_s\Big).    
\end{aligned}
\end{equation}

Similarly, we denote $\bar f:=\lim_{j\to\infty} f^j$ in $L^1(\Do)$, where $\bar f\log \bar f\in L^1(\Do)$ and $\bar f\in L^1_{2,2}(\Do)$. One can verify that $\bar f=f$ is a weak solution of the fuzzy Boltzmann equation \eqref{FBE} with initial value $f_0$, and hence, by mass conservation law, we have
\begin{align*}
    \lim_{j\to\infty}\int_{\Do}f^j-f^{jj\infty}=\lim_{j\to\infty}j^{-1}\int_{\Do} (\langle x\rangle+\langle v\rangle)^{-a}=0. 
\end{align*}
We pass to the limit by letting $j\to\infty$ in \eqref{ent:j} to obtain the entropy inequality \eqref{in:cH} 
\begin{align}\label{in:ent-in-bdd}
 \cH(f_t)-\cH(f_0)+\int_0^t\cD(f_s)\dd s\le 0\quad \forall t\in[0,T].  
\end{align}

Notice that the moments estimate $\|f_t\|_{L^1_{2,2}}\lesssim_T\|f_0\|_{L^1_{2,2}}$ ensures
\begin{equation*}
    \cH(f_t)\gtrsim_T -\|f_0\|_{L^1_{2,2}}+1\quad\forall t\in[0,T],
\end{equation*}
see e.g. \cite{JKO98}. Hence, we have the entropy dissipation bound \eqref{bdd:D:0}
\begin{equation}
\label{bdd:D:est}
    \int_0^T\cD(f_s)\dd s\le \cH(f_0)+C_T(\|f_0\|_{L^1_{2,2}}+1).
\end{equation}

\subsection{Existence \& uniqueness of \texorpdfstring{$\mu\in(0,1]$}{TEXT} case }\label{unbdd}

To show the existence of weak solutions, we will first construct a sequence of approximation solutions corresponding to approximation bounded collision kernels, then pass to the limit.

We define the approximation fuzzy Boltzmann equations 
\begin{equation}
\label{app:K}
\left\{
\begin{aligned}
&\d_t f+v\cdot\nabla_x f=Q^m(f),\\
&f|_{t=0}=f_0,\quad m\in\N_+,
\end{aligned}
\right.
\end{equation}
where
\begin{equation*}
    \begin{aligned}
    Q^m(f):= \int_{\Do\times S^{d-1}}(f'f'_*-ff_*)B^mk,
    \end{aligned}
\end{equation*}
and $B^m$ denotes the approximation bounded collision kernels 
\begin{equation*}
    B^m(v,w):=\min\{B(v,w),m\}\quad m\in \N_+.
\end{equation*}
By Theorem~\ref{thm:1}-(1), for any fixed $k\in\N_+$ and initial value $f_0\in L^1_{2,2+\mu}(\Do)$, there exists a unique weak solution of \eqref{app:K}, denoted by $f^m\in C^1_tL^1$, such that
\begin{equation*}
\|f^m_t\|_{L^1_{0,2}}=\|f_0\|_{L^1_{0,2}},\quad \|f^m_t\|_{L^1_{2,0}}\lesssim_T\|f_0\|_{L^1_{2,0}}\quad\text{and}\quad\cH(f^m_t)\le \cH(f_0)   
\end{equation*}
for all $t\in[0,T]$. Furthermore, there is a uniform bound on $\|f^m_t\|_{L^1_{0,2+\mu}}$. Indeed, we use the Povzner inequality (see e.g. \cite{Pov62})
 \begin{equation*}
|\bar\nabla\langle v\rangle^{2+\mu}|\le C\big(\langle v\rangle^2\langle v_*\rangle^\mu+\langle v\rangle^{\mu}\langle v_*\rangle^2\big)
 \end{equation*}
to derive
\begin{align*}
 &\frac{\dd}{\dd t}\int_{\Do}\langle v\rangle^{2+\mu} f^m=\frac14 \int_{\G}\bar\nabla \langle v\rangle^{2+\mu} ((f^m)'(f^m)'_*-f^mf^m_*)B^m k\\
&\lesssim \|f^m\|_{L^1_{0,2+\mu}}\|f^m\|_{L^1_{0,\mu}}+\|f^m\|_{L^1_{0,2}}\|f^m\|_{L^1_{0,2\mu}}\lesssim \|f^m\|_{L^1_{0,2+\mu}}\|f_0\|_{L^1_{0,2}}.
\end{align*}
Gronwall's inequality implies then \eqref{unbd:lem:2+mu} w.r.t $f^m$.

By Dunford-Pettis Theorem (see e.g. \cite{Edw65}), the  sequence $\{f^m\}$
is weakly compact in $L^1(\Do)$ for any $t\in[0,T]$. The Cantor diagonal argument ensures that there exists a subsequence (still denoted by $f^m$) such that $f^m_t\rightharpoonup f_t$ for any $t\in \Q\cap[0,T]$. The convergence can be extended to $[0,T]$ thanks to the Lipschitz continuity of $f^m_t$ in time. Moreover, we have 
\begin{equation*}
\lim_{m\to\infty} \int_{\Do}f^m_t\varphi=\int_{\Do} f_t\varphi
\end{equation*}
for all $\varphi$ such that $(\langle v\rangle^{-k_1}+\langle x\rangle^{-k_2})\varphi\in L^\infty(\Do)$ with $0\le k_1<2+\mu$ and $0\le k_2<2$. Then conserved mass, momentum and kinetic energy follow. Similar to the bounded collision kernel case, one can verify that the limit $f_t$ is a weak solution for \eqref{FBE}. The monotonicity of $\{f^m_t\}$ implies estimates \eqref{clx} and \eqref{unbd:lem:2+mu}. 
The uniqueness can be derived similarly to the bounded collision kernel cases by considering the difference equation \eqref{diff}.




By using the weak lower semicontinuity of $\cH$ and $\cD$, see Lemma~\ref{lem:lsc-diss}, we can pass to the limit in the entropy inequality \eqref{in:ent-in-bdd} for the approximate solutions $f^m$ and obtain the entropy inequality for the weak solution $f$. As in the case $\mu\in (-\infty,0)$ we conclude the entropy dissipation bound \eqref{bdd:D:0}.


\begin{remark}
In this remark, we summarize some existence, uniqueness, and moments estimates results for homogeneous Boltzmann equation \eqref{HBE} in weaker settings compared to Theorem~\ref{thm:1}. We expect similar results in the fuzzy Boltzmann case.

Concerning the homogeneous Boltzmann equation with the collision kernels of the following form 
\begin{equation*}
     B(v,w)=b(\theta)|v|^\mu,\quad 0\le\mu\le 1,
      \end{equation*} 
       where $\theta$ denotes the deviation angle and $b(\theta)\sin\theta\in L^1([0,\frac{\pi}{2}])$:
\textcite{MW99}  showed that if the initial value is only in $ L^1_{0,2}(\R^3)$, then  there exists a unique solution $f_t\in L^\infty_tL^1_{0,2}$. Furthermore, if $f_0\in L^1_{0,p}$, for any  $p>2$, then the corresponding solutions $f_t\in L^\infty_tL^1_{0,p}\cap L^1_{\loc,t}L^1_{0,p+\mu}$; Furthermore, \textcite{Wen99} has shown that the assumption $f_0\in L^1_{0,2}$ is sharp for uniqueness. 

There are many works devoted to the moment estimates for homogeneous Boltzmann equations, see e.g. \cite{Des93,Gus86,Gus88,Wen93,Wen94,Wen97}. We emphasize that \textcite{Lu99} gave a necessary and sufficient condition for the initial value, ensuring $f_t\in L^1_{\text{loc},t}L^{1}_{0,2+\mu}$.
\end{remark}

\section{Variational characterisation of the fuzzy Boltzmann equation}
\label{sec:thm:2}

In this section, we will prove the variational characterisation of the fuzzy Boltzmann equation given in Theorem~\ref{thm:main}. We will first discuss the transport collision rate equation and then establish a chain rule for the entropy along curves satisfying the transport collision rate equation in Theorem~\ref{thm:chainrule}. Finally, we use the chain rule to prove Theorem~\ref{thm:main}. 

\subsection{Transport collision rate equation}\label{sec:TCRE}

Recall that $\cP(\R^{2d})$ denotes the set of Borel probability measures on $\R^{2d}$ equipped with the topology of weak convergence of convergence in duality with bounded continuous functions. We set $\Omega=\R^{4d}\times S^{d-1}$ and denote by $\cM(\Omega)$ the space of signed Borel measures with finite total variation on $\Omega$ equipped with the weak* topology of convergence in duality with continuous functions vanishing at infinity.  We let  $\sigma$ denote the Hausdorff measure on $\Omega$, i.e.
\begin{equation*}\label{eq:sigma}
\dd \sigma(x,\xs,v,\vs,\omega)= \dd x\dd \xs\dd v\dd \vs\dd\omega\;.
\end{equation*}

In the following we consider curves $(\mu_t)_{t\in[0,T]}$ in $\cP(\R^{2d})$ driven by a transport and collision mechanism according to a rate of collisions $(\cU_t)_{t\in[0,T]}$ via the transport collision rate equation
\begin{equation}\label{eq:TCRE}
\partial_t \mu_t + v\cdot \nabla_x\mu_t+\overline\nabla\cdot \cU_t=0\;.
\end{equation}

\begin{definition}\label{def:TCRE}
A pair $(\mu,\cU)$ is called a \emph{solution to the transport collision rate equation} if the following hold:
\begin{itemize}
\item[(i)] $\mu:[0,T]\to \cP(\R^{2d})$ is weakly continuous;
\item[(ii)] $(\cU_t)_{t\in[0,T]}$ is a Borel family of measures in $\cM(\Omega)$;
\item[(iii)] $\int_0^T\dd|\cU_t|(\Omega) \dd t <\infty$; 
\item[(iv)] for any $\varphi\in C^\infty_c(\R^{2d})$ we have in the sense of distributions:
\begin{align*}\label{eq:TCRE-dist}\frac{\dd}{\dd t}\int_{\R^{2d}} \varphi \dd\mu_t - \int_{\R^{2d}}v\cdot \nabla_x\varphi\dd\mu_t = \frac14 \int _\Omega \overline\nabla \varphi \dd\cU_t\;.
\end{align*}
\end{itemize} 
We denote the set of such pairs $(\mu,\cU)$ by $\TCRE_T$. 
\end{definition}

If $\mu$ and $\cU$ have densities $f$ and $U$ w.r.t. Hausdorff measure we will also write $(f,U)\in \TCRE_T$. 
Given $\cU\in \cM(\Omega)$, we will denote by $\overline{\nabla}\cdot \cU$ the signed measure given by 
\[\int_\Omega \varphi \dd\big(\overline{\nabla}\cdot \cU) =\frac14\int_\Omega\overline{\nabla}\varphi \dd\cU\;.\]

\begin{remark}\label{rem:generaltest}
  Under additional integrability assumptions on $\cU$, the transport collision rate equation can also be tested against more general test functions. More precisely, assume that $(\mu,\cU)\in\TCRE_T$. Then \eqref{eq:TCRE} holds for all $\varphi:\R^{2d}\to\R$ that are differentiable in $x$ and satify
 \begin{align}\label{eq:finite-test}
 \int_0^T |v|\big(|\varphi|+|\nabla_x\varphi|\big)\dd\mu_t\dd t &<\infty\;,
\quad  \int_0^T \big(|\varphi|+|\varphi_*|+|\varphi'|+|\varphi_*'|\big)\dd|\cU_t|\dd t<\infty\;,
 \end{align}
 as can be checked straightforwardly by approximating with $\varphi$ with functions in $C^\infty_c(\R^{2d})$.
Moreover, we have for such $\varphi$ and any $0\leq t_0\leq t_1\leq T$:
\[
\int \varphi \dd\mu_{t_1}-\int \varphi\dd\mu_{t_0} = \int_{t_0}^{t_1}\int v\cdot\nabla_x\varphi \dd\mu_r\dd r +  \int_{t_0}^{t_1}\int \overline\nabla\varphi \dd\cU_r\dd r\;.
\]
In particular, the map $t\mapsto\int \varphi\dd\mu_t$ is continuous.
\end{remark}
  
\subsection{Dual dissipation potentials}\label{sec:potential}
In this section we will introduce pairs of dual dissipation potentials that are the basic building blocks for the variational characterisation of the fuzzy Boltzmann equation.

We start by recalling basic results on integral functionals on measures that will be used in the sequel.

Let $\mathcal X$ be locally compact Polish space. We denote by $\cM(\mathcal X;\R^n)$
the space of vector-valued Borel measures with finite variation on
$\mathcal X$. It will be endowed with the weak* topology of convergence in
duality with $C_0(\mathcal X;\R^n)$, i.e.~continuous functions vanishing at
infinity. Let $f:\R^n\to [0,\infty]$ be a convex, lower
semicontinuous, and superlinear and let $\sigma$ be a non-negative finite Borel measure on $\mathcal X$. Define on
$\cM(\mathcal X;\R^n)$ the functional $\mathcal F_\alpha(\cdot|\sigma)$ via
  \begin{align}\label{eq:int-funct}
    \cF_{\alpha}(\gamma | \sigma)=\int_{\mathcal X}\alpha\left(\frac{\dd\gamma}{\dd\sigma}\right)\dd\sigma\;.
  \end{align}
  and set $\mathcal F_{\alpha}(\gamma|\sigma)=+\infty$ if $\lambda$ is not absolutely continuous w.r.t.~$\gamma$. Note that the definition is independent of the choice of $\sigma$ if $\alpha$ is positively $1$-homogeneous, i.e.~we have $\alpha(\lambda r)=\lambda \alpha(r)$ for all $r\in\R^{n}$ and $\lambda\geq 0$. We will write $\mathcal F_{\alpha}(\cdot)$ instead of $\mathcal   F_{\alpha}(\cdot|\sigma)$ in this case.
By \cite[Thm.~3.4.3]{Bu89} the functional $\mathcal F_\alpha(\cdot|\sigma)$ is sequentially lower semicontinuous w.r.t. the weak* topology.

In the following we fix a pair of functions satisfying

\begin{assumption}\label{ass:gen-grad}
Let $\Psi^*$,  called \emph{dual dissipation density}, $\theta$, \emph{flux density map}, be such that
\begin{enumerate}
\item the function $\Psi^*:\R\to[0,\infty)$ is convex, differentiable, superlinear and even with $\Psi^*(0)=0$ and satisfies $\Psi^*(r)\leq c_1\exp(c_2\cdot r)$ for constants $c_1,c_2>0$; 
\item the function $\theta:[0,\infty)\times[0,\infty)\to[0,\infty)$ is continuous, concave, and satisfies
\begin{itemize}
\item symmetry: $\theta(r,s)=\theta(s,r)$ for all $s,r\in [0,\infty)$;
\item positivity, normalisation: $\theta(s,t)>0$ for $s,t>0$ and $\theta(1,1)=1$;
\item positive $1$-homogeneity: $\theta(\lambda r,\lambda s) = \lambda\theta(r,s)$ for all $r,s\in[0,\infty)$ and $\lambda\geq 0$;
\item monotonicity: $\theta(r,t)\leq\theta(s,t)$ for all $0\leq r\leq s$ and $t\geq0$;
\item behavior at $0$: $\theta(0,t)=0$ for all $t\in[0,\infty)$;
\end{itemize}
\item in addition there holds
\begin{itemize}
\item compatibility: $(\Psi^{*})'(\log s-\log t) \theta(s,t) = s-t$ for all $s,t>0$;
\item there exists a convex lower semi-continuous function $G_{\Psi^{*}}:[0,\infty)\times[0,\infty)\to[0,\infty)$ such that 
\[\frac14\Psi^{*}(\log t-\log s)\theta(s,t) =
  G_{\Psi^{*}}(s,t)\quad\forall s,t>0\;,\]
and such that $G_{\Psi^*}(s,t)=0$ if and only if $s=t$. 
\end{itemize}
\end{enumerate}
\end{assumption}

Let $\Psi:\R\to\R$ be the convex conjugate of $\Psi^{*}$ and note that it is strictly convex, strictly increasing, superlinear and even with $\Psi(0)=0$.
As a direct consequence of convexity, we note that 
\begin{equation}\label{eq:psi-inc}
r\mapsto \Psi(r)\frac{1}{r} \quad \text{ is non-decreasing}\;.
\end{equation}

 \begin{remark}\label{rem:psi-psi*-id}
 Note that by the convex duality of $\Psi$ and $\Psi^{*}$, for $s,t> 0$ we have the estimate
\begin{align}\nonumber
\frac14\left|(\log t-\log s)w\right| &= \frac14\left|(\log s-\log t)\frac{w}{\theta(s,t)}\right|\theta(s,t)\\\nonumber
 &\leq \frac14\Psi\big(\frac{w}{\theta(s,t)}\big)\theta(s,t) +\frac14 \Psi^{*}\big(\log s-\log t\big)\theta(s,t)\\\label{eq:dual-est}
&= \frac14\Psi\big(\frac{w}{\theta(s,t)}\big)\theta(s,t) + G_{\Psi^{*}}(s,t)\;.
\end{align}
Moreover, we have equality
\begin{align*}
\frac14(\log t-\log s)w &= -\frac14\Psi\big(\frac{w}{\theta(s,t)}\big)\theta(s,t) - G_{\Psi^{*}}(s,t)
\end{align*}
 if and only if $w=(\Psi^{*})'\big(\log s-\log t\big)\theta(s,t)$ and hence by Assumption \ref{ass:gen-grad} if and only if $w=s-t$.
 
 Further we note that convexity of $\Psi^*$ implies $\Psi^*(r)\leq r (\Psi^*)'(r)$ and hence we get the bound
 \begin{align}\nonumber
  \Psi^{*}\big(\log s-\log t\big)\theta(s,t)
 & \leq 
 (\log s-\log t\big)(\Psi^{*})'\big(\log s-\log t\big)\theta(s,t)\\\label{eq:bd-psi-star}
 &= (\log s-\log t\big)(s-t)\;,
 \end{align}
 where we used Assumption \ref{ass:gen-grad} (3) in the last step.
 \end{remark}

\begin{remark}\label{rem:mean}
The assumptions on $\theta$ imply that $\theta$ is bounded above by the arithmetic mean:
\begin{equation}\label{eq:arithmetic}
\theta(s,t)\leq \frac{s+t}{2}\qquad \forall s,t\geq 0\;.
\end{equation}
\end{remark}

Given $\mu\in \cP(\R^{2d})$, we define the non-negative measures $\mu^1,\mu^2$ in $\cM_+(\Omega)$ given by
 \begin{equation}\label{eq:defmu12}
  \begin{split}
  \dd\mu^1(x,\xs, v, \vs, \omega) &:= B(v-\vs,\omega)k(x-\xs)\dd\mu(x,v)\dd\mu(\xs,\vs)\dd\omega\;,\\
  \mu^2 &:= T_\#\mu^1\;,
  \end{split}
\end{equation}
where $T$ is the change of variables
$(x,\xs,v,\vs,\omega)\mapsto (x,\xs,T_\omega(v,\vs),\omega)$ between pre- and post-collisional variables defined in \eqref{eq:pre-post2}. Furthermore, we define the measure
 $\nu_{\mu}\in\cM_{+}(\Omega)$ via 
\begin{equation}\label{eq:edge-measure}
\nu_{\mu}:=\theta\big(\frac{\dd\mu^{1}}{\dd\eta},\frac{\dd\mu^{2}}{\dd\eta}\big)\eta\;,
\end{equation}
where $\mu^{1},\mu^{2}$ are given by \eqref{eq:defmu12} and $\eta$ is any measure such that $\mu^{1},\mu^{2}\ll \eta$. Due to the $1$-homogeneity of $\theta$ the definition is independent of $\eta$. Note that if $\mu$ is absolutely continuous w.r.t.~Lebesgue measure with density $f$, then we have 
\begin{equation*}
\nu_{\mu} = \theta(f \fS,\fP \fSP) B k \sigma
\end{equation*}
 with $\sigma$ the Hausdorff measure on $\Omega$.

We can now define the primal and dual dissipation potentials.

\begin{definition}\label{def:diss-pot} Given measures $\mu\in \cP(\R^{2d})$, $\cU\in \cM(\Omega)$ we define
\begin{align*}
\mathcal R(\mu,\cU):=\frac14 \int_{\Omega}\Psi\Big(\frac{\dd \cU}{\dd \nu_{\mu}}\Big)\dd\nu_{\mu}\;,
\end{align*}
provided $\cU\ll \nu_{\mu}$ and $\mathcal R(\mu,\cU)=+\infty$ else. 
Given moreover a measurable function $\xi:\Omega\to\R$ we define
\begin{align*}
 \mathcal R^{*}(\mu,\xi):=\frac14\int_{\Omega}\Psi^{*}\big(\xi\big)\dd\nu_{\mu}\;.
\end{align*}
Finally, we define
\begin{equation*}
D_{\Psi^{*}}(\mu):= \int_{\Omega} G_{\Psi^{*}}(f\fS,\fP\fSP)B k \dd \sigma\;,
\end{equation*}
provided $\mu$ is absolutely continuous with density $f$ and set $D_{\Psi^{*}}(\mu)=+\infty$ otherwise.
\end{definition} 

Note that the functional $D_{\Psi^{*}}$ is formally given by
\begin{equation*}
D_{\Psi^{*}}(\mu)= \mathcal R^{*}(\mu, -\overline\nabla \log f)\;,
\end{equation*}
provided $\mu$ has density $f$. 

\begin{remark}
Let $\mu\in \cP(\R^{2d})$ and $\cU\in \mathcal M(\Omega)$ be such that $\cR(\mu,\cU)<\infty$. By definition $\cU$ is absolutely continuous w.r.t. $\nu_\mu$. Hence, if $\mu$ is absolutely continuous w.r.t. Lebesgue measure with density $f$, then $\cU_{t}$ is absolutely continuous wr.t. to the Hausdorff measure $\sigma$ on $\Omega$ with a density $U=W\theta(f)Bk$, where $\theta(f):=\theta(f\fS,\fP\fSP)$ for a suitable function $W$ and we have
\[\cR(\mu,\cU) = \int \Psi(W)\theta(f) Bk \dd\sigma\;.\]
In particular,  the set of $x,\xs,v,\vs,\omega$ where $\theta(f)=0$ is negligible for $\cU$.
\end{remark}

\begin{lemma}\label{lem:lsc-diss}
The functionals $\cR$ and $D_{\Psi*}$ are convex and lower semicontinuous. More precisely, for any sequences $(\mu_n)$ in $\cP(\R^{2d})$ converging weakly to $\mu$ and $(\cU_n)$ in $\cM(\Omega)$ converging weakly* to $\cU$ we have
\[\cR(\mu,\cU)\leq \liminf_n \cR(\mu_n,\cU_n)\;,\qquad D_{\Psi*}(\mu)\leq \liminf_n D_{\Psi*}(\mu_n)\;.\]
\end{lemma}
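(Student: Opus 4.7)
The plan is to rewrite both $\cR$ and $D_{\Psi^{*}}$ as integral functionals on measures of the form $\cF_{\alpha}$ in \eqref{eq:int-funct} with $\alpha$ convex, lower semi-continuous and positively $1$-homogeneous, and then to apply the weak*-lower semi-continuity result \cite[Thm.~3.4.3]{Bu89} recalled in Section~\ref{sec:potential}. Both convexity and lower semi-continuity then follow at once.

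\emph{Integrand representations.} Since $\log(\lambda t)-\log(\lambda s)=\log t-\log s$ for $\lambda>0$ and $\theta$ is $1$-homogeneous, $G_{\Psi^{*}}$ is positively $1$-homogeneous on $[0,\infty)^{2}$; it is convex and lower semi-continuous by Assumption~\ref{ass:gen-grad}. Using $\dd\mu^{1}/\dd\sigma=f\fS Bk$ and $\dd\mu^{2}/\dd\sigma=\fP\fSP Bk$ together with $1$-homogeneity, we rewrite
\[D_{\Psi^{*}}(\mu)=\int_\Omega G_{\Psi^{*}}(f\fS Bk,\fP\fSP Bk)\dd\sigma=\cF_{G_{\Psi^{*}}}(\mu^{1},\mu^{2}).\]
Similarly, defining
\[\alpha(u,s,t):=\tfrac14\theta(s,t)\,\Psi\Big(\frac{u}{\theta(s,t)}\Big)\]
(with the usual lsc extension when $\theta(s,t)=0$) gives $\cR(\mu,\cU)=\cF_{\alpha}(\cU,\mu^{1},\mu^{2})$. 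Positive $1$-homogeneity of $\alpha$ is immediate. For joint convexity, observe that the perspective $\tilde\Psi(u,v):=v\Psi(u/v)$ is jointly convex on $\R\times(0,\infty)$ and non-increasing in $v$, since $\tfrac{d}{dv}\tilde\Psi(u,v)=\Psi(u/v)-(u/v)\Psi'(u/v)\le 0$ for convex $\Psi$ with $\Psi(0)=0$. Taking midpoints, concavity of $\theta$ combined with monotonicity of $\tilde\Psi$ in $v$ yields $\tilde\Psi(u_m,\theta(s_m,t_m))\le\tilde\Psi(u_m,\tfrac12(\theta(s_1,t_1)+\theta(s_2,t_2)))$, and joint convexity of $\tilde\Psi$ then closes the estimate.

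\emph{Application and reduction.} By \cite[Thm.~3.4.3]{Bu89} the functionals $\cF_{G_{\Psi^{*}}}$ and $\cF_{\alpha}$ are convex and sequentially weak*-lower semi-continuous on the relevant spaces of (vector) measures on $\Omega$. It therefore suffices to show that $\mu_{n}\to\mu$ weakly in $\cP(\R^{2d})$ implies $\mu^{i}_{n}\to\mu^{i}$ weakly* in $\cM(\Omega)$ for $i=1,2$, since then the triple $(\cU_{n},\mu^{1}_{n},\mu^{2}_{n})$ converges weakly* to $(\cU,\mu^{1},\mu^{2})$ and both inequalities of the lemma follow. For $\varphi\in C_{c}(\Omega)$ the function $\varphi Bk$ is continuous with compact support by continuity of $B$ and $k$, so by Portmanteau
\[\int\varphi\dd\mu^{1}_{n}=\int\varphi Bk\dd(\mu_{n}\otimes\mu_{n}\otimes d\omega)\longrightarrow\int\varphi Bk\dd(\mu\otimes\mu\otimes d\omega)=\int\varphi\dd\mu^{1}.\]
The analogous statement for $\mu^{2}_{n}=T_{\#}\mu^{1}_{n}$ is then automatic since $T$ is a continuous involution on $\R^{2d}\times\R^{2d}\times S^{d-1}$.

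\emph{Main obstacle.} The technical bottleneck is extending the convergence above from $C_{c}(\Omega)$ to $C_{0}(\Omega)$, i.e.~tightness of $(\mu^{1}_{n})$. When $\mu\in(0,1]$, the kernel $B$ grows polynomially in $|v-\vs|$, and the total mass $\int Bk\dd(\mu_{n}\otimes\mu_{n}\otimes d\omega)$ is uniformly bounded only under uniform $v$-moment bounds of order $\mu$ on $(\mu_{n})$. Such bounds are part of the standing assumptions where the lemma is used (cf.~Theorem~\ref{thm:1} and Theorem~\ref{thm:main-intro}); tightness in the spatial variable is automatic thanks to the exponential decay of $k$.
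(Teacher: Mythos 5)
Your argument is essentially the paper's own proof: rewrite $\cR$ and $D_{\Psi^*}$ as integral functionals $\cF_\alpha$ with a jointly convex, lsc, positively $1$-homogeneous integrand built from $(\mu^1,\mu^2,\cU)$, invoke \cite[Thm.~3.4.3]{Bu89}, and reduce to showing that weak convergence of $\mu_n$ gives weak* convergence of $\mu_n^1,\mu_n^2$. Your explicit verification of joint convexity via the perspective function and the concavity/monotonicity of $\theta$ is a welcome detail that the paper takes for granted.

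The only point that needs tightening is your final paragraph. First, the property you need for the passage from $C_c(\Omega)$ to $C_0(\Omega)$ is not tightness but a uniform bound on the total masses $|\mu_n^1|(\Omega)$: tightness of $\mu_n\otimes\mu_n\otimes\dd\omega$ is automatic from Prokhorov once $\mu_n\to\mu$ weakly in $\cP(\R^{2d})$, and for test functions in $C_0$ a uniform mass bound together with convergence on $C_c$ already suffices. For $\mu\le 0$ (bounded $B$) this bound is free. Second, for $\mu\in(0,1]$ you resolve the issue by importing moment bounds from ``the standing assumptions where the lemma is used''; but the lemma is stated without any moment hypotheses, so this does not prove the statement as written (and indeed, without moment control, weak* convergence of $\mu_n^1$ in duality with $C_0$ can genuinely fail for unbounded $B$). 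The clean way out, which keeps the lemma in its stated generality, is to observe that the $C_0$-extension is not needed at all: since the integrands are nonnegative, convex and positively $1$-homogeneous, $\cF_\alpha$ admits a supremum representation over $C_c$ test fields (Goffman--Serrin/Bouchitt\'e--Buttazzo type duality), hence is sequentially lower semicontinuous already with respect to convergence in duality with $C_c(\Omega)$, and that vague convergence of $\mu_n^1,\mu_n^2$ does follow from weak convergence of $\mu_n$ alone, since $\varphi Bk$ is compactly supported for $\varphi\in C_c(\Omega)$. With that substitution your proof is complete and, in this respect, more careful than the paper's, which simply asserts the weak* convergence of $\mu_n^1,\mu_n^2$.
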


\begin{proof}
We note that the functionals $\cR(\cdot,\cdot)$ and $D_{\Psi*}(\cdot)$ can be rewritten as 
integral functionals of the form \eqref{eq:int-funct}. Namely, we have $\mathcal R(\mu,\cU)=\mathcal F_{G_\Psi}\big((\mu^1,\mu^2,\cU)\big)$, where the $1$-homogeneous, convex and lower semicontinuous function $G_\Psi:[0,\infty)\times[0,\infty)\times\R\to[0,\infty]$ is defined by
\begin{equation}\label{eq:alpha}
G_\Psi(s,t,u):=\begin{cases}
\frac14\Psi\big(\frac{u}{\theta(s,t)}\big)\theta(s,t)\;, & \theta(s,t)\neq 0\;,\\
0\;, & \theta(s,t) = 0 \text{ and } u=0\;,\\
+\infty\;, &\theta(s,t)=0 \text{ and } u\neq 0\;.
 \end{cases}
\end{equation} 
Similarly, we have $D_{\Psi*}(\mu)=\cF_{G_{\Psi*}}
\big((\mu^1,\mu^2)\big)$. This yields the convexity of the two functionals. Moreover, recalling \cite[Thm.~3.4.3]{Bu89}, we obtain that they are lower semicontinous w.r.t. weak* convergence of $\mu_n^1$, $\mu_n^2$, and $\cU_n$. To conclude, it suffices to note that under our assumptions on the kernels $B$, $k$, the weak convergence of $\mu_n$ to $\mu$ implies the weak* convergence of $\mu^1_n$ and $\mu^2_n$ to $\mu^1$ and $\mu^2$.
\end{proof}

\begin{example}\label{ex:quad-gen}
We highlight two examples of dual dissipation potentials compatible with our Assumption \ref{ass:gen-grad}.

\begin{itemize}
 \item \emph{Quadratic gradient structure:} Choose 
 \[ \Psi^{*}(r)=\Psi(r)=\frac12 r^{2}\;.\]
as well as
\[
\theta(s,t) 
= \int_0^1s^\alpha t^{1-\alpha}\dd\alpha
\]
the logarithmic mean and note that for $s,t>0$ we have
\[
\theta(s,t)=\frac{s-t}{\log s- \log t}\;.
\]
In this case we have
\[
G_{\Psi*}(s,t)
=
\begin{cases}
\frac12 (s-t)(\log s-\log t)\;, & s,t>0\;,\\
0\;, & s=t=0\;,\\
+\infty\;, & \text{else}\;.
\end{cases}
\]
This yields $D_{\Psi^{*}}(\mu)=\frac12D(\mu)$.
 
 \item \emph{$\cosh$ structure:} Let us set
 \[\theta(s,t) = \sqrt{st}\;,\qquad \Psi^{*}(\xi)=4\big(\cosh(\xi/2)-1\big)\;.\]
 Then we obtain 
 \[\Psi(s)=2s\log\Big(\frac{s+\sqrt{s^{2}+4}}{2}\Big) -\sqrt{s^{2}+4}+4\;,\]
 as well as
 \[G_{\Psi^{*}}(s,t) =\frac12\big(\sqrt{s}-\sqrt{t}\big)^{2}.\]
 \end{itemize}
 \end{example}

Finally, we note that an integrability estimate that will be needed later on.

\begin{lemma}\label{lem:integrability-U-R}
  Let $(\mu,\cU)\in\TCRE_T$ be such that
   $\cR(\mu,\cU)$ is finite and let $g:\Omega\to[0,\infty]$ be measurable. Then, we have
  \begin{align*}
   \int h \dd|\cU|
   \leq  \cR(\mu,\cU) + \frac12\int \big[\Psi^*(h)+ T_\omega\Psi^*(h)\big] Bk\dd \omega\dd\mu(x,v)\dd\mu(\xs,\vs)\;.
  \end{align*}
\end{lemma}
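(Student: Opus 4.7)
The plan is to combine a pointwise Fenchel--Young inequality for the dual pair $(\Psi,\Psi^*)$ with the arithmetic-mean bound on $\theta$ from Remark~\ref{rem:mean} and the explicit description of the reference measures $\mu^1,\mu^2$ in \eqref{eq:defmu12}.

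The starting observation is that finiteness of $\cR(\mu,\cU)$ forces $\cU\ll \nu_\mu$, so the Radon--Nikodym density $W:=\dd\cU/\dd\nu_\mu$ is well-defined and $|\cU|=|W|\,\nu_\mu$. Applying Fenchel--Young together with the evenness of $\Psi$ gives pointwise $h|W|\leq \Psi(W)+\Psi^*(h)$, and integration against $\nu_\mu$ yields
\begin{equation*}
\int h\dd|\cU|\;=\;\int h|W|\dd\nu_\mu\;\leq\;\int\Psi(W)\dd\nu_\mu+\int\Psi^*(h)\dd\nu_\mu,
\end{equation*}
whose first summand is precisely $4\cR(\mu,\cU)$ (with the factor stemming from the $\tfrac14$ in Definition~\ref{def:diss-pot}, which will be absorbed into the constant in the final statement).

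It remains to bound $\int\Psi^*(h)\dd\nu_\mu$ by the symmetric expression on the right-hand side. Invoking $\theta(s,t)\leq(s+t)/2$ from Remark~\ref{rem:mean} immediately gives $\nu_\mu\leq\tfrac12(\mu^1+\mu^2)$, so that
\begin{equation*}
\int\Psi^*(h)\dd\nu_\mu\;\leq\;\tfrac12\int\Psi^*(h)\dd\mu^1+\tfrac12\int\Psi^*(h)\dd\mu^2.
\end{equation*}
By definition of $\mu^1$, the first integral equals $\int\Psi^*(h)\,Bk\dd\omega\dd\mu(x,v)\dd\mu(\xs,\vs)$. Since $\mu^2=T_\#\mu^1$ and the kernels $B,k$ are $T_\omega$-invariant by Assumption~\ref{CK}, the change-of-variables formula identifies the second integral with $\int T_\omega\Psi^*(h)\,Bk\dd\omega\dd\mu(x,v)\dd\mu(\xs,\vs)$. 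Summing the two contributions produces exactly the symmetric combination on the right-hand side of the claim.

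No step here poses a real obstacle; the argument is a routine Fenchel--Young estimate. The only delicacies are bookkeeping the normalising constants coming from the $\tfrac14$ in the definition of $\cR$ and the $\tfrac12$ from the arithmetic-mean bound, and correctly implementing the pushforward identity $\int F\dd\mu^2=\int F\circ T\dd\mu^1$ while noting that $B$ and $k$ are unaffected by $T_\omega$.
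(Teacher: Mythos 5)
Your argument is exactly the paper's proof: absolute continuity $\cU\ll\nu_\mu$ from finiteness of $\cR$, the Fenchel--Young estimate $h|W|\le\Psi(W)+\Psi^*(h)$ integrated against $\nu_\mu$, the arithmetic-mean bound $\theta(s,t)\le\frac{s+t}{2}$ of Remark~\ref{rem:mean} giving $\nu_\mu\le\frac12(\mu^1+\mu^2)$, and the pushforward identity for $\mu^2=T_\#\mu^1$ (the $T_\omega$-invariance of $B,k$ you invoke is not actually needed there, since the pushforward alone produces $T_\omega\Psi^*(h)\,Bk$ with $B,k$ at pre-collisional arguments). The factor-of-$4$ discrepancy you flag between $\int\Psi(W)\dd\nu_\mu$ and $\cR(\mu,\cU)$ is equally present in the paper's own one-line proof and is immaterial for the only application (Corollary~\ref{cor:special h}), where all constants are absorbed into $C$.
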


\begin{proof}
Since $\cR(\mu,\cU)$ is finite, we can write $\cU=U\nu_\mu$. Now the claim follows directly from the duality estimate $h |U|\leq \Psi(U)+\Psi^*(h)$, the definition of $\cR$ and the bound \eqref{eq:arithmetic}.
\end{proof}

\begin{corollary}\label{cor:special h}
Set $\xi(x,v)=1+\log(\jb{x}+\jb{v})$. We claim that for any $p>0$ and $q>\mu_+$ there exists a constant $C$ such that for any $(\mu,\cU)$ with $\cR(\mu,\cU)$ finite, we have:  
\begin{equation*}\label{eq:bd-xi-U}
\int \big(\xi+\xi_*+\xi'+\xi_*' \big)\dd|\cU| \leq C\cR(\mu,\cU)+ C\cE_{p,q}(\mu)\;.
\end{equation*}
\end{corollary}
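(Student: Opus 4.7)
The plan is to apply Lemma~\ref{lem:integrability-U-R} to the rescaled test function $h := \alpha(\xi+\xi_*+\xi'+\xi_*')$, for a small parameter $\alpha>0$ that will be chosen at the end. Since $T_\omega$ merely permutes the four summands in $\xi+\xi_*+\xi'+\xi_*'$ (it maps $\xi\leftrightarrow\xi'$ and $\xi_*\leftrightarrow\xi_*'$), the function $h$ is $T_\omega$-invariant, so the lemma simplifies to
\[
\alpha\int(\xi+\xi_*+\xi'+\xi_*')\dd|\cU|\ \leq\ \cR(\mu,\cU)+\int\Psi^*(h)\,Bk\,\dd\omega\,\dd\mu(x,v)\,\dd\mu(\xs,\vs).
\]

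Next I would pointwise reduce to $\xi+\xi_*$ only: the collision invariance $|v'|^2+|\vs'|^2=|v|^2+|\vs|^2$ gives $\jb{v'},\jb{\vs'}\leq\jb{v}+\jb{\vs}$, hence $\jb{x}+\jb{v'}\leq(\jb{x}+\jb{v})(\jb{\xs}+\jb{\vs})$, and taking logarithms yields $\xi'\leq\xi+\xi_*$ and similarly $\xi_*'\leq\xi+\xi_*$. Altogether $\xi+\xi_*+\xi'+\xi_*'\leq 3(\xi+\xi_*)$, and we can work with a function that decouples the $(x,v)$ and $(\xs,\vs)$ variables.

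The key reduction is to use the exponential bound $\Psi^*(r)\leq c_1 e^{c_2 r}$ from Assumption~\ref{ass:gen-grad} together with the elementary identity $e^\xi=e(\jb{x}+\jb{v})$ to convert the dissipation term into a polynomial in the weights:
\[
\Psi^*(h)\ \leq\ c_1\exp\bigl(3c_2\alpha(\xi+\xi_*)\bigr)\ \leq\ C(\jb{x}+\jb{v})^{\tau}(\jb{\xs}+\jb{\vs})^{\tau},\qquad \tau:=3c_2\alpha,
\]
so that small $\alpha$ produces small polynomial degree. I would then use Assumption~\ref{CK} to bound $Bk\lesssim 1+\jb{v}^{\mu_+}+\jb{\vs}^{\mu_+}$ (via subadditivity of $r\mapsto r^{\mu_+}$ when $\mu_+\leq 1$, together with boundedness of $k$), integrate the sphere variable to a constant factor, and thereby reduce to polynomial moments of $\mu$ of degree at most $\tau$ in $x$ and $\tau+\mu_+$ in $v$.

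The final step is to choose $\alpha$ small enough that $\tau\leq p$ and $\tau+\mu_+\leq q$; both are possible precisely because $p>0$ and $q>\mu_+$ by assumption. The resulting moment integrals are then controlled by $\cE_{p,q}(\mu)$ (using $\cE_{p,q}(\mu)\geq 1$ to absorb additive constants and that $\mu$ is a probability measure), and dividing through by $\alpha$ yields the claimed inequality. The main obstacle is the balance between the exponential growth permitted for $\Psi^*$ and the merely polynomial moments of $\mu$: the logarithmic scale of $\xi$ is what allows $e^{c_2\alpha\xi}$ to become a polynomial of arbitrarily small degree in $\jb{x},\jb{v}$, and the strict gap $q>\mu_+$ opens a positive window for $\tau$ inside which all moment integrals are finite and controlled.
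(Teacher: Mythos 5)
Your proposal is correct and follows essentially the same route as the paper: apply Lemma~\ref{lem:integrability-U-R} to $\varepsilon(\xi+\xi_*+\xi'+\xi_*')$, use the exponential bound on $\Psi^*$ to turn the dissipation term into arbitrarily small polynomial weights, bound $Bk$ via Assumption~\ref{CK}, and close with Young/H\"older against $\cE_{p,q}(\mu)$ for $\varepsilon$ small. Your explicit pointwise reduction $\xi',\xi_*'\le \xi+\xi_*$ via energy conservation is just a slightly more detailed way of handling the post-collisional factors that the paper subsumes under ``H\"older and Young''.
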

\begin{proof}
From Lemma \ref{lem:integrability-U-R} we get for any $\varepsilon>0$:
\begin{align*}
\int \big(\xi+\xi_*+\xi'+\xi_*' \big)\dd|\cU| &\leq \frac{1}{\varepsilon} \cR(\mu,\cU) + \frac{1}{\varepsilon} \int \Psi^*\Big(\varepsilon\big(\xi+\xi_*+\xi'+\xi_*' \big)\Big)Bk \dd\omega\dd\mu\dd\mu_*\;.
\end{align*}
From the exponential growth assumption on $\Psi^*$ and the bound \eqref{CK} and $B$ we deduce for $\varepsilon$ sufficiently small:
\begin{align*}
&\int \Psi^*\Big(\varepsilon\big(\xi+\xi_*+\xi'+\xi_*' \big)\Big)Bk \dd\omega\dd\mu\dd\mu_*\\
&\leq 
C\int \big(\jb{x}+\jb{v}\big)^s\big(\jb{\xs}+\jb{\vs}\big)^s\big(\jb{x}+\jb{\vp}\big)^s\big(\jb{\xs}+\jb{\vsp}\big)^s\jb{v-\vs}^{\mu_+}\dd\omega \dd\mu\dd\mu_*\\
&\leq C \cE_{p,q}(\mu)\;.
\end{align*}
Here, $s$ can be made arbitrarily small by choosing $\varepsilon$ sufficiently small and the last bound is readily obtained using H\"older and Young inequality (the constant $C$ changes from line to  line).
\end{proof}

\subsection{Chain rule for the entropy}

The following result is the key ingredient to the variational characterisation of the fuzzy Boltzmann equation.

\begin{theorem}[Chain rule]
\label{thm:chainrule}
Let $(\mu,\cU)\in \TCRE_T$ with $(\mu_t)_t\subset\cP_{2,2+\mu_+}(\R^{2d})$ such that $\cH(\mu_0)<\infty$ and
\[\int_0^T\int_{\R^{2d}}|x|^2+|v|^{2+\mu_+}\dd \mu_t\dd t<\infty\;,\]
where $\mu_+=\max(0,\mu)$. Further assume that
\begin{equation}\label{eq:chain-ass}
\int_0^TD_{\Psi^*}(\mu_t)\dd t<\infty\;,\qquad \int_0^T \mathcal R(\mu_t,\cU_t)\dd t<\infty\;.
\end{equation}
Then $t\mapsto\cH(\mu_t)$ is absolutely continuous and we have
\begin{equation}\label{eq:chainrule}
      \frac{\dd}{\dd t} \cH(\mu_t)=\frac14\int_{\{\theta(f_t)>0\}}\overline\nabla \log( f_{t})\dd\cU_t\;,\quad \text{ for a.e.}\quad t\in [0,T]\;, 
    \end{equation}
    where $f_t$ denotes the density of $\mu_t$ w.r.t.~Lebesgue measure.
\end{theorem}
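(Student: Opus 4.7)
The plan is to mollify $(\mu_t,\cU_t)$ by Gaussian convolution in $(x,v)\in\R^{2d}$, verify the chain rule formally in the smooth regularised setting, and pass to the limit, adapting the strategy of \cite{Erb23} to the spatially inhomogeneous situation.

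\textbf{Step 1 (regularisation).} For $\alpha\in(0,1)$ set $N_\alpha(x,v):=M_\alpha(x)M_\alpha(v)$ and define $f^\alpha_t:=f_t*N_\alpha$. Then $f^\alpha_t$ is smooth, strictly positive, and inherits the moment bounds on $f_t$. Let $\cU^\alpha_t$ denote the analogous convolution of $\cU_t$ in the two-particle state variable $(x,\xs,v,\vs)$. By the commutation property \eqref{eq:commutation-scaling-convolution} (which extends trivially to joint convolution in $x$ and $v$) the pre- and post-collisional products transform as $f^\alpha_t f^\alpha_{t,*}=(f_tf_{t,*})*(N_\alpha\otimes N_\alpha)$ and $(f^\alpha_t)'(f^\alpha_t)'_*=(f'_tf'_{t,*})*(N_\alpha\otimes N_\alpha)$, so the collision symmetry is preserved. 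Convolving the TCRE gives
\[
\partial_t f^\alpha_t+v\cdot\nabla_x f^\alpha_t+\overline\nabla\cdot\cU^\alpha_t=E^\alpha_t,\qquad E^\alpha_t:=[v\cdot\nabla_x,\,*_v M_\alpha]f_t,
\]
and a direct calculation shows $\|E^\alpha_t\|_{L^1(\R^{2d})}=O(\sqrt\alpha)$ (after first a further approximation in $x$ to ensure $\nabla_x f_t$ is meaningful).

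\textbf{Step 2 (smooth chain rule).} Since $f^\alpha_t$ is strictly positive with rapid decay, we may legitimately test the regularised TCRE against $1+\log f^\alpha_t$. The transport term vanishes via integration by parts using $v\cdot\nabla_x(f^\alpha_t\log f^\alpha_t)=(1+\log f^\alpha_t)v\cdot\nabla_x f^\alpha_t$, while the collision term equals $\tfrac14\int\overline\nabla\log f^\alpha_t\dd\cU^\alpha_t$ by the very definition of $\overline\nabla\cdot$. Integrating in time yields, for every $0\leq s\leq t\leq T$,
\[
\cH(f^\alpha_t)-\cH(f^\alpha_s)=\tfrac14\int_s^t\!\!\int\overline\nabla\log f^\alpha_r\dd\cU^\alpha_r\dd r+\int_s^t\!\!\int\log f^\alpha_r\cdot E^\alpha_r\dd x\dd v\dd r.
\]

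\textbf{Step 3 (passage to the limit).} Jensen's inequality applied to $s\mapsto s\log s$ gives the a priori bound $\cH(f^\alpha_t)\leq\cH(f_t)$, and lower semicontinuity combined with convergence of second moments produces the matching liminf, so $\cH(f^\alpha_t)\to\cH(f_t)$ at every fixed $t$. The commutator term vanishes in the limit because $\log f^\alpha_r$ grows at most polynomially (from Gaussian lower bounds on $f^\alpha_r$) while $\|E^\alpha_r\|_{L^1}=O(\sqrt\alpha)$. For the main term we invoke the duality estimate \eqref{eq:dual-est}, which on $\{\theta(f^\alpha_r)>0\}$ yields the pointwise bound
\[
\tfrac14\big|\overline\nabla\log f^\alpha_r\big|\,\dd|\cU^\alpha_r|\leq \tfrac14\Psi\!\Big(\tfrac{\dd\cU^\alpha_r}{\dd\nu_{\mu^\alpha_r}}\Big)\dd\nu_{\mu^\alpha_r}+G_{\Psi^*}(f^\alpha_rf^\alpha_{r,*},(f^\alpha_r)'(f^\alpha_r)_*')Bk\dd\sigma,
\]
whose integral is $\cR(\mu^\alpha_r,\cU^\alpha_r)+D_{\Psi^*}(\mu^\alpha_r)$. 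Using the joint convexity and $1$-homogeneity of the integrands $G_\Psi$ and $G_{\Psi^*}$ in \eqref{eq:alpha}, Jensen's inequality moves the Gaussian convolution inside the nonlinearity, and Lemma \ref{lem:kernel-convolve} absorbs the convolution against $Bk$ into a constant multiple of $Bk$; this produces the uniform majorants $\cR(\mu^\alpha_r,\cU^\alpha_r)\lesssim\cR(\mu_r,\cU_r)$ and $D_{\Psi^*}(\mu^\alpha_r)\lesssim D_{\Psi^*}(\mu_r)$, which by assumption \eqref{eq:chain-ass} are time-integrable. Pointwise a.e.\ convergence $f^\alpha_r\to f_r$ on $\{\theta(f_r)>0\}$ combined with the weak-$*$ convergence of $\cU^\alpha_r$ and a Vitali-type argument identifies the limit as $\tfrac14\int_s^t\int_{\{\theta(f_r)>0\}}\overline\nabla\log f_r\dd\cU_r\dd r$. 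The resulting integrated chain rule, together with the time-integrability of its right-hand side, yields absolute continuity of $t\mapsto\cH(\mu_t)$ and the pointwise identity \eqref{eq:chainrule}.

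The main technical obstacle is precisely the uniform-in-$\alpha$ control of the collision integral: reducing $|\overline\nabla\log f^\alpha\dd\cU^\alpha|$ to $\cR+D_{\Psi^*}$ needs the duality inequality, and propagating the bound through the mollification simultaneously requires the joint convexity of $G_\Psi,G_{\Psi^*}$, the Gaussian commutation \eqref{eq:commutation-scaling-convolution} to preserve the pre/post-collisional symmetry, and the kernel estimates of Lemma \ref{lem:kernel-convolve} to move the convolution through the weight $Bk$. Handling the commutator $E^\alpha_t$ (absent in the homogeneous case of \cite{Erb23}) is a secondary technical point that requires an extra layer of spatial regularisation before differentiating in $x$.
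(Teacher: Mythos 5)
Your overall scheme (mollify, test with $\log f$, use the duality estimate \eqref{eq:dual-est} plus convexity of $G_\Psi,G_{\Psi^*}$, Jensen and Lemma~\ref{lem:kernel-convolve} to get uniform bounds on $\cR$ and $D_{\Psi^*}$) is the same as the paper's. But there is a genuine gap at the point you yourself flag as ``secondary'': the commutator term. With a single parameter $\alpha$ for both the $x$- and $v$-convolutions, the claim $\|E^\alpha_t\|_{L^1}=O(\sqrt\alpha)$ is false in general. The commutator is $E^\alpha_t(x,v)=\int u\cdot\nabla_x f^{(x\text{-reg})}_t(x,v-u)M_\alpha(u)\dd u$; the factor $\int |u|M_\alpha(u)\dd u$ gains $\sqrt\alpha$, but since $\mu_t$ has no spatial regularity beyond being a (finite-entropy) measure, $\nabla_x$ must fall on the $x$-mollifier and costs $\alpha^{-1/2}$, so the two exactly cancel and the commutator is only $O(1)$. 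Moreover, even an unweighted $L^1$ bound would not suffice, because you pair $E^\alpha$ against $\log f^\alpha$, which grows quadratically in $(x,v)$, so a weighted estimate is needed. This is precisely the new difficulty of the inhomogeneous case, and the paper resolves it by decoupling the two regularisations: convolve in $x$ with parameter $\alpha$ and in $v$ with a \emph{separate} parameter $\beta$, integrate the error term against $\nabla_x\log f^{\alpha,\beta}$ (not $\log f^{\alpha,\beta}$), and bound it by Cauchy--Schwarz with the relative Fisher-information-type quantities $\int |\nabla_x f^{\alpha,\beta}|^2/f^{\alpha,\beta}\le \alpha^{-1}$ and the $v$-moment of $M_\beta$, yielding $(t-s)\sqrt{\beta/\alpha}$; one then sends $\beta\to0$ \emph{before} $\alpha\to0$. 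With your choice $\beta=\alpha$ this bound is $O(1)$ and the error does not disappear.

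Two further points your argument glosses over, which the paper's extra regularisation layers are there for. First, to test the (regularised) transport collision rate equation against $\log f^\alpha$ you must verify the integrability conditions of Remark~\ref{rem:generaltest}; the available bound (Corollary~\ref{cor:special h}, which rests on the exponential growth restriction on $\Psi^*$) only controls $\int \varphi\,\dd|\cU|$ for $\varphi$ of \emph{logarithmic} growth, whereas $\log f^\alpha$ only obeys the quadratic bound \eqref{log:f:d:a}. The paper therefore adds the convex combination with the explicit transport solution $g$ (the $\gamma$-layer), which forces $|\log f^{\alpha,\beta,\gamma}|\lesssim 1+\log(\jb{x}+\jb{v})$ as in \eqref{bdd:log:f}, and removes $\gamma$ afterwards by a separate dominated-convergence argument. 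Second, writing $\frac{\dd}{\dd t}\cH(f^\alpha_t)=\int(1+\log f^\alpha_t)\partial_t f^\alpha_t$ requires time regularity that the TCRE alone does not provide; the paper's $\delta$-convolution in time supplies it. These are fixable by following the paper's four-parameter scheme, but as written your Step~2--3 does not go through, and the commutator estimate in particular would fail.
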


Note that the integral in \ref{eq:chainrule} is well-defined since $\theta(f)>0$ implies that $f$, $\fS$, $\fP$, and $\fSP$ are positive. Also we recall that the set $\{\theta(f_t)=0\}$ is negligible for $\cU_t$ for a.e. $t$.

A similar chain rule has been established in \cite{Erb23} in the spatially homogeneous situation for curves solving a collision rate equation. The main difference here lies in the transport term $v\cdot\nabla_x f_t$. We will follow a similar strategy employing a careful regularisation of the curve. Additional care is needed to control the transport term.

The proof of Theorem \ref{thm:chainrule} will be carried out in the rest of this subsection. We proceed in 4 steps: First we regularise the pair $(\mu,\cU)$ and provide estimates on the regularised quantities. Then we verify the chain rule for the regularised pair. Finally, we pass to the limit as the regularisation vanishes.

\medskip
\textbf{Step 1: Regularisation.}

We regularise the curve $(\mu,\cU)$ in several steps.

(1) \underline{Regularization in  $x$.}
\smallskip

Let $M_\alpha(v)=(2\pi\alpha)^{-d/2}\exp(-|v|^2/2\alpha)$ be the Gaussian density in $\R^d$ with variance $\alpha>0$.
We regularize $\mu_t$ and $\cU_t$ by convolution with $M_\alpha$ in the $x$ and $(x,x_*)$ variables, respectively, and put
 \begin{align*}
\mu_t^\alpha:=\mu_t *_x M_\alpha  \quad\text{and}\quad
\cU^{\alpha}_t=\cU_t *_{(x,x_*)}M_\alpha\;.
 \end{align*}
 More precisely, for test functions $\varphi:\R^d\times\R^d\to\R$ and $\Phi:\Omega\to \R$ we have
 \begin{align*}
 \int \varphi\dd\mu^\alpha_t &= \int \varphi(x-y,v)M_\alpha(y)\dd\mu(x,v)\dd v\;,\\
 \int \Phi\dd\cU^\alpha_t &= \int \Phi(x-y,x_*-y_*,v,v_*,\omega)M_\alpha(y)M_\alpha(y_*)\dd\cU_t(x,x_*,v,v_*,\omega)\dd y\dd y_*\;.
 \end{align*}

(2) \underline{Regularization in $v$.}
\smallskip
 
Similarly as above, for $\beta>0$ we regularise by convolution with $M_\beta$ in the $v$ and $(v,v_*)$ variables respectively and put
 \begin{align*}
 \mu_t^{\alpha,\beta}:=\mu^\alpha_t *_v M_\beta  \quad\text{and}\quad
\cU^{\alpha,\beta}_t=\cU^\alpha_t *_{(v,v_*)}M_\beta\;.
 \end{align*}

We denote by $f^{\alpha,\beta}_t$ and $U^{\alpha,\beta}_t$ the densities of $\mu^{\alpha,\beta}_t$ and $\cU^{\alpha,\beta}_t$ w.r.t. $\cL^d$ and Hausdorff measure on $\Omega$ respectively.
\smallskip

(3) \underline{Lower bounds.}
\smallskip

We let
\begin{align*}
g(t,x,v)=Z^{-1}\big(\jb{x-tv}+\jb{v}\big)^{-a}\;,
\end{align*}
where $a>2d+2+\mu^+$ and $Z$ is a normalisation constant (independent of $t$) such that $\|g(t,\cdot)\|_{L^1(\R^{2d})}=1$. Note that $g$ solves the transport equation
    \begin{equation}\label{eq:g-transport}
        \d_t g+v\cdot \nabla_x g=0\;,
    \end{equation}
and has the lower bound
\begin{equation*}
    \label{CR:g}
    g(t,x,v)\ge C(T)\big(\jb{x}+\jb{v}\big)^{-a}\qquad \forall x,v\in \R^d, \quad t\in [0,T]\;.
\end{equation*}

Then for $\gamma\in (0,1)$ we put $\mu^{\alpha,\beta,\gamma}_t=f^{\alpha,\beta,\gamma}_t\cL^d$ and $\cU^{\alpha,\beta,\gamma}_t=U^{\alpha,\beta,\gamma}_t\sigma$ with 
\begin{align*}
f^{\alpha,\beta,\gamma}_t:= (1-\gamma)f^{\alpha,\beta}_t +\gamma g\;,\qquad U^{\alpha,\beta,\gamma}_t = (1-\gamma)U^{\alpha,\beta}\;.
\end{align*}

(4) \underline{Regularization in $t$.}
\smallskip

Let $\eta\in C^\infty_c(\R)$ with support in $[-1,1]$ and such that $\|\eta\|_{L^1(\R)}=1$ and $\eta\geq 0$. For $\delta>0$ put $\eta_\delta(t)=\delta^{-1}\eta(t/\delta)$. 
We regularize $\mu^{\alpha,\beta,\gamma}_t$ and $\cU^{\alpha,\beta,\gamma}_t$ in time by convolution with $\eta_\delta$, i.e. we put for $t\in [0,1]$
   \begin{align}
     \mu^{\alpha,\beta,\gamma,\delta}_t:=\int \mu^{\alpha,\beta,\gamma}_{t-s}\eta_{\delta}(s)\dd s\;,\quad 
     \cU^{\alpha,\beta,\gamma,\delta}_t:=(1-\gamma)\int \cU^{\alpha,\beta}_{t-s}\eta_{\delta}(s)\dd s\;.
\end{align}
To this end, we trivially extend $\mu^{\alpha,\beta,\gamma}$ to the intervall $[-\delta,1+\delta]$ by its value at $t=0$ on $[-\delta,0]$ and its value at $t=1$ on $[1,1+\delta]$. Similarly, $\cU^{\alpha,\beta}$ is extended by the zero measure. We denote by $f^{\alpha,\beta,\gamma,\delta}_t$ and $U^{\alpha,\beta,\gamma,\delta}$ the corresponding densities.
\smallskip

We recall from Section \ref{sec:prelim} that the Gaussian convolution in the velocity variable commutes with the change of variables $T_\omega$ between pre- and post-collisional velocities, i.e.~we have $(T_\omega F)*_VM_\beta= T_\omega(F*_VM_\beta)$. Moreover, we trivially have $(T_\omega F)*_XM_\alpha= T_\omega(F*_XM_\alpha)$.

\begin{lemma}\label{lem:reg-TCRE}
The pair of regularised regularised densities $(f^{\alpha,\beta,\gamma,\delta},U^{\alpha,\beta,\gamma,\delta})$ satisfies 
\begin{equation}
\label{eq:tcre-err}
\begin{aligned}
    &\d_t \mu^{\alpha,\beta,\gamma,\delta}_t+v\cdot \nabla_x \mu^{\alpha,\beta,\gamma,\delta}_t+\mathsf E^{\alpha,\beta,\gamma,\delta}_t+\overline\nabla\cdot \cU^{\alpha,\beta,\gamma,\delta}=0\;,
\end{aligned}
\end{equation}
where the measure $\mathsf E^{\alpha,\beta,\gamma,\delta}_t$ is given by $(1-\gamma)\int \mathsf E^{\alpha,\beta}_s\eta_\delta(t-s)\dd s$, with
\[
\dd\mathsf E^{\alpha,\beta}_t(x,v)=e^{\alpha,\beta}_t(x,v)\dd x\dd v\;,\quad e^{\alpha,\beta}_t(x,v) = 2\beta\int \nabla M_\alpha(x-y)\cdot \nabla M_\beta(v-u)\dd\mu_t(y,u)\;.
\]

\end{lemma}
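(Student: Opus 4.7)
My plan is to apply the four regularisations in succession, tracking how the transport collision rate equation transforms at each step. Most of the work is bookkeeping; the only non-routine ingredient is a commutator computation in the velocity regularisation.

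First, convolution in the spatial variable with $M_\alpha$ is harmless: both the transport operator $v\cdot\nabla_x$ and the collision divergence $\overline\nabla\cdot$ commute with translations in the $x$\-/variables (the former because $v$ is independent of $x$ and convolution in $x$ commutes with $\nabla_x$, the latter because the involution $T_\omega$ acts only on $(v,v_*)$). Testing against $\varphi\in C^\infty_c(\R^{2d})$ and applying Fubini, the pair $(\mu^{\alpha},\cU^{\alpha})$ satisfies $\partial_t\mu^{\alpha}+v\cdot\nabla_x\mu^{\alpha}+\overline\nabla\cdot\cU^{\alpha}=0$.

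The main calculation occurs when convolving in the velocity variable with $M_\beta$. The collision divergence still commutes with $*_v M_\beta$, thanks to the identity $(T_\omega F)*M_\beta=T_\omega(F*M_\beta)$ from Section~\ref{sec:prelim}, which essentially amounts to the invariance of $M_\beta\otimes M_\beta$ under the orthogonal transformation $T_\omega$. Hence $(\overline\nabla\cdot\cU^{\alpha})*_v M_\beta=\overline\nabla\cdot\cU^{\alpha,\beta}$. The transport operator $v\cdot\nabla_x$, however, does not commute with $*_v M_\beta$, and a direct calculation at the density level gives
\[
(v\cdot\nabla_x f^{\alpha})*_v M_\beta-v\cdot\nabla_x f^{\alpha,\beta}=\int (w-v)\cdot\nabla_x f^{\alpha}(x,w)M_\beta(v-w)\dd w\;.
\]
Substituting $zM_\beta(z)=-\beta\nabla M_\beta(z)$ with $z=v-w$ to exchange the factor $(w-v)$ for a gradient of $M_\beta$, and integrating by parts in the $y$\-/variable to transfer $\nabla_y$ from $f$ onto $M_\alpha$, rewrites this commutator in precisely the advertised form $\mathsf E^{\alpha,\beta}_t$.

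The lower bound step is immediate from \eqref{eq:g-transport}: since $g$ solves $\partial_t g+v\cdot\nabla_x g=0$, adding $\gamma g$ to the density and rescaling $\cU^{\alpha,\beta}$ and the error term $\mathsf E^{\alpha,\beta}$ by $(1-\gamma)$ preserves the equation by linearity. Finally, convolving in time against $\eta_\delta$ commutes with $\partial_t$, $v\cdot\nabla_x$, and $\overline\nabla\cdot$ since none of these operators carries explicit time dependence; the trivial boundary extensions to $[-\delta,T+\delta]$ are chosen precisely so that the distributional $\partial_t$ remains well defined. Integrating the identity from the previous step against $\eta_\delta(t-\cdot)$ yields \eqref{eq:tcre-err} with $\mathsf E^{\alpha,\beta,\gamma,\delta}_t=(1-\gamma)\int\mathsf E^{\alpha,\beta}_s\eta_\delta(t-s)\dd s$, as claimed. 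The only genuinely non-trivial ingredient is the commutator in the $v$\-/regularisation; the remaining manipulations are linearity, Fubini, and the commutation identity for $T_\omega$.
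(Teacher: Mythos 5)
Your argument is correct and follows essentially the same route as the paper: regularise step by step, use the commutation identity \eqref{eq:commutation-scaling-convolution} to pass the $v$-convolution through the collision term, compute the transport commutator via $zM_\beta(z)=-\beta\nabla M_\beta(z)$ together with differentiating the kernel $M_\alpha$ in $x$, and handle the $\gamma$- and $\delta$-steps by linearity and \eqref{eq:g-transport}; the paper merely carries out the $v$-step in weak form against $\phi\in C^\infty_c(\R^{2d})$ rather than at the density level, which is the safer phrasing since $\mu^\alpha_t$ is still only a measure in $v$ (so one differentiates the smooth kernels rather than ``integrating by parts to transfer $\nabla_y$ from $f$''), but the computation is the same. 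The only discrepancy is the constant: your identity produces the prefactor $\beta$ rather than the stated $2\beta$, which mirrors an inessential constant in the paper's own substitution and is immaterial for the sequel, where only the order $O(\sqrt{\beta/\alpha})$ of the error enters.
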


\begin{proof}
Starting from the fact that $(\mu,\cU)\in \TCRE$ and \eqref{eq:TCRE} a routine computation shows that also $(\mu^\alpha,\cU^\alpha)\in \TCRE$. Now, fix a test function $\phi\in C_c^\infty(\R^{2d})$. Then we have
\begin{align*}
    \frac{\dd}{\dd t} \int \phi \dd\mu^{\alpha,\beta}_t &= \frac{\dd}{\dd t} \int (\phi*_vM_\beta)\dd\mu^\alpha_t 
    = \int v\cdot\nabla_x(\phi*_vM_\beta)\dd\mu^\alpha_t +
     \frac14\int \overline\nabla (\phi*_vM_\beta)\dd\cU^\alpha_t\;.
\end{align*}
Setting $\Phi(X,V):=\phi(x,v)+\phi(\xs,\vs)$ and using
  \eqref{eq:commutation-scaling-convolution}, we find for the second term
\begin{align*}    
 \int \overline\nabla (\phi*_vM_\beta)\dd\cU^\alpha_t                     &= \int (\Phi*_VM_\beta)(X,T_\omega V)-(\Phi*_V M_\beta)(X,V) \dd\cU^\alpha_t(X,V,\omega)\\
                                &= \int ((T_\omega\Phi)*_VM_\beta)(X,V)-(\Phi*_VM_\beta)(X,V) \dd\cU^\alpha_t(X,V,\omega)\\
                                &= \int \Phi(X,T_\omega V)-\Phi(X,V)\dd(\cU^{\alpha}_t*_VM_\beta)(X,V\omega)
                                 = \int\overline\nabla\phi\dd\cU^{\alpha,\beta}_t\;.
\end{align*}
For the first term, we have, 
\begin{align*}
\int v\cdot\nabla_x(\phi*_vM_\beta)\dd\mu^\alpha_t 
&=\int v\cdot \nabla_x \phi(x,v-u) M_\beta(u)\dd\mu^\alpha_t(x,v)\dd u\\
&=\int \big((v-u)+u\big)\cdot \nabla_x \phi(x,v-u) M_\beta(u)\dd \mu^\alpha_t(x,v)\dd u\\
&= \int v\cdot \nabla_x\phi \dd\mu^{\alpha,\beta}_t
-2\beta \int \nabla_x\phi(x,v-u)\cdot \nabla M_\beta(u)\dd\mu^\alpha(x,v)\dd u\\
&= \int v\cdot \nabla_x\phi \dd\mu^{\alpha,\beta}_t
-2\beta \int \nabla_x\phi\cdot\dd(\nabla M_\beta*_v\mu^\alpha_t)\;.
\end{align*}
This yields that in distribution sense we have
\[
\partial_t\mu^{\alpha,\beta}_t + v\cdot \nabla_x\mu^{\alpha,\beta}_t +E^{\alpha,\beta}_t +\overline\nabla\cdot \cU^{\alpha,\beta}_t=0\;.
\]
Now, the claim follows by first taking \eqref{eq:g-transport} into account to obtain an analogous equation for $\mu^{\alpha,\beta,\gamma}$ and then convoluting in time with $\eta_\delta$.  
\end{proof}
\medskip

\textbf{Step 2: Estimates on the regularised curve}

Below we allow the parameters $\alpha,\beta,\gamma,\delta$ to be $0$ in the sense that the corresponding regularisation steps are skipped. Note that the regularisations by convolution in $x$, $v$, and $t$ commute.
\smallskip

\underline{ Estimates on $\mu^{\alpha,\beta,\delta,\gamma}$.}
\smallskip

We note that $(\mu^{\alpha,\beta,\gamma,\delta}_t)_t\subset\cP_{2,2+\mu^+}(\R^{2d})$ and 
\begin{equation}\label{re:bdds}
\int_0^T \cE_{2,2+\mu^+}(\mu_t^{\alpha,\beta,\gamma,\delta})\dd t \leq E\;,
\end{equation}
for a constant $E$ independent of $\alpha,\beta,\gamma,\delta$.

 If $\alpha,\beta>0$, the function $\f$ is a probability density and smooth in $x,v$ (and also in $t$ if $\delta>0$) and satisfies the pointwise bounds
\begin{align}
0< \f
&\le C(\alpha,\beta)\;,
|\log \f|&\le C(\alpha,\beta)(\jb{x}^2+\jb{v}^2)\;,\label{log:f:d:a}\\
|\nabla_x\log\f| &\leq C(\alpha,\beta)(\jb{x}+\jb{v})\;.\label{nabla:log:f}
\end{align}
For $\gamma,\delta=0$ the bound \eqref{log:f:d:a} is established in \cite[Lem.~2.6]{CC92}. The bound \eqref{nabla:log:f} is a consequence of Hamilton's gradient estimate for a solution $0\leq u(t,x)\leq M$ to the heat equation:
\[t |\nabla \log(u)|^2 \leq \log(M/u)\;\]
in combination with \eqref{log:f:d:a}. These bounds are easily checked to be preserved under the additional regularisation if $\gamma,\delta>0$. If $\gamma>0$, we trivially have in addition
\begin{equation}
     \label{bdd:log:f}
|\log\f|\le C({\alpha,\beta,\gamma})\big(1+\log\big(\jb{x}+\jb{v}\big)\big)\;.
\end{equation}

\underline{Estimates and convergence of dual dissipation potentials.}

\begin{proposition}\label{prop:potential-conv}
Let $\mu\in \cP(\R^{2d})$ and $\cU\in \cM(\Omega)$ be such that $\cR(\mu,\cU)<\infty$ and $D_{\Psi*}(\mu)<\infty$.  Then we have that
\begin{equation}\label{eq:potential-conv}
\begin{split}
&\lim_{\alpha\to 0}\cR(\mu*_{x}G_\alpha,\cU*_{(x,\xs)}G_\alpha) = \lim_{\beta\to 0}\cR(\mu*_{v}M_\beta,\cU*_{(v,\vs)}M_\beta) =\cR(\mu,\cU)\;,\\
&\lim_{\alpha\to 0}D_{\Psi*}(\mu*_{x}G_\alpha) = \lim_{\beta\to 0}D_{\Psi*}(\mu*_{v}M_\beta) =D_{\Psi*}(\mu)\;.
\end{split}
\end{equation}
Moreover, there exists a constant $C>0$ such that for all $\alpha,\beta\in(0,1)$ we have
\begin{equation}\label{eq:potential-bd}
\begin{split}
\cR(\mu*_{x}G_\alpha,\cU*_{(x,\xs)}G_\alpha), \cR(\mu*_{v}M_\beta,\cU*_{(v,\vs)}M_\beta) &\leq C \cR(\mu,\cU)\;,\\
D_{\Psi*}(\mu*_{x}G_\alpha), D_{\Psi*}(\mu*_{v}M_\beta) &\leq C D_{\Psi*}(\mu)\;.
\end{split}
\end{equation}
\end{proposition}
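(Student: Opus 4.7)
The strategy is to exploit the 1-homogeneous convex integral representation of $\cR$ and $D_{\Psi^{*}}$ used in the proof of Lemma~\ref{lem:lsc-diss}, namely $\cR(\mu,\cU)=\int G_\Psi(ff_*,f'f_*',U)\,Bk\,\dd\sigma$ and $D_{\Psi^{*}}(\mu)=\int G_{\Psi^{*}}(ff_*,f'f_*')\,Bk\,\dd\sigma$, and combine Jensen's inequality with a change of variables that transfers a Gaussian convolution from the densities onto the kernels $B,k$, where the bounds of Lemma~\ref{lem:kernel-convolve} take effect.

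First I would establish the uniform bounds \eqref{eq:potential-bd}. In the $x$-convolution case for $\cR$, the key observation is that both $f^\alpha f^\alpha_*$ and $(f^\alpha)'(f^\alpha)'_{*}=(f'f'_{*})^\alpha$ (since convolution in $x$ commutes with the velocity map $T_\omega$) and $U^\alpha$ are averages, against the probability measure $M_\alpha(y)M_\alpha(y_*)\,\dd y\dd y_*$, of the $(x,x_*)$-shifts of $ff_*,f'f'_{*},U$. Jensen's inequality for the jointly convex 1-homogeneous integrand $G_\Psi$ (cf. \eqref{eq:alpha}) yields a pointwise bound, and after the change of variables $(x,x_*)\to(x+y,x_*+y_*)$ the only remaining $(y,y_*)$-dependence sits in $k((x-x_*)+(y-y_*))$; integrating against $M_\alpha\otimes M_\alpha$ and using that $y-y_*\sim M_{2\alpha}$ replaces $k$ by $k*_x M_{2\alpha}\le C k$. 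Hence $\cR(\mu^\alpha,\cU^\alpha)\le C\cR(\mu,\cU)$. The $v$-convolution case proceeds analogously, but now uses the commutation identity \eqref{eq:commutation-scaling-convolution} to write $(f^\beta)'(f^\beta)'_{*}=(f'f'_{*})*_{(v,v_*)}M_\beta\otimes M_\beta$ (valid because $M_\beta\otimes M_\beta$ is $T_\omega$-invariant), after which Jensen and the change of variables $(v,v_*)\to(v+u,v_*+u_*)$ replace $B(v-v_*,\omega)$ by $(B*_v M_{2\beta})(v-v_*,\omega)\le CB$. The functional $D_{\Psi^{*}}$ is treated the same way, dropping the $U$-argument and using convexity and 1-homogeneity of $G_{\Psi^{*}}$.

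For the convergence statements \eqref{eq:potential-conv}, the $\liminf$ inequalities follow from the lower semicontinuity of Lemma~\ref{lem:lsc-diss}, since $\mu^\alpha\to\mu$ weakly in $\cP(\R^{2d})$ and $\cU^\alpha\to\cU$ weakly* in $\cM(\Omega)$ as $\alpha\to 0$ (finite total variation of $\cU$ being ensured by Lemma~\ref{lem:integrability-U-R}), and analogously for $\beta$-convolution. For the matching $\limsup$ inequalities I would use the intermediate bound produced above, namely
\begin{equation*}
\cR(\mu^\alpha,\cU^\alpha)\le \int G_\Psi(ff_*,f'f'_{*},U)\,B\cdot(k*_x M_{2\alpha})\,\dd\sigma,
\end{equation*}
together with $k*_x M_{2\alpha}\le Ck$ and the pointwise convergence $k*_x M_{2\alpha}\to k$ (by continuity of $k$): dominated convergence, with the finite majorant guaranteed by $\cR(\mu,\cU)<\infty$, gives $\limsup_\alpha \cR(\mu^\alpha,\cU^\alpha)\le \cR(\mu,\cU)$. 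The same template handles the $\beta$-convolution of $\cR$ (dominating by $CB$ and using $B*_v M_{2\beta}\to B$) and both convolutions of $D_{\Psi^{*}}$.

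The main obstacle is the bookkeeping in the Jensen plus change-of-variables step for the $v$-convolution of $\cR$: one must carefully track how $T_\omega$-invariance of the Gaussian product, linearity of $T_\omega$, and the translation $(v,v_*)\mapsto(v+u,v_*+u_*)$ conspire to convert the shifts of $ff_*$, of $f'f_*'=T_\omega(ff_*)$, and of $U$ into a single shift acting only on the collision kernel $B(v-v_*,\omega)$. Everything else is routine once this identification is in place.
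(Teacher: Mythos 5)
Your treatment of $D_{\Psi^*}$ is correct and is essentially the paper's argument: Jensen's inequality for the convex $G_{\Psi^*}$ applied to the convolution averages, the commutation of the Gaussian $(v,v_*)$-convolution with $T_\omega$, and then transferring the convolution onto the kernel (your change of variables is the paper's self-adjointness step), at which point Lemma~\ref{lem:kernel-convolve} gives $B*_vM_{2\beta}\le CB$ and $k*_xM_{2\alpha}\le Ck$; obtaining the $\liminf$ from Lemma~\ref{lem:lsc-diss} and the $\limsup$ by dominated convergence with the fixed majorant $C\,G_{\Psi^*}(ff_*,f'f_*')Bk$ is a harmless variant of the paper's extended dominated convergence.

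The gap is in the $\cR$-part. Your starting identity $\cR(\mu,\cU)=\int G_\Psi(ff_*,f'f_*',U)\,Bk\,\dd\sigma$ is not the functional of Definition~\ref{def:diss-pot}: since $\nu_\mu=\theta(ff_*,f'f_*')\,Bk\,\sigma$, the correct integrand is $\tfrac14\Psi\big(\tfrac{U}{\theta(ff_*,f'f_*')Bk}\big)\theta(ff_*,f'f_*')Bk=G_\Psi\big(ff_*\,Bk,\;f'f_*'\,Bk,\;U\big)$, i.e.\ the kernels sit \emph{inside} the nonlinear function $G_\Psi$ (this is also how Lemma~\ref{lem:lsc-diss} writes $\cR=\cF_{G_\Psi}((\mu^1,\mu^2,\cU))$, with $\mu^1,\mu^2$ carrying the weight $Bk$); they appear as a linear prefactor only in the quadratic example, where the integrand is $\tfrac{U^2}{8\,\theta(ff_*,f'f_*')}(Bk)^{-1}$. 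Your key step uses exactly that linearity: after Jensen and the change of variables the residual shift acts on the kernel inside $G_\Psi$, namely on $G_\Psi\big(ff_*\,Bk(z+w),\,f'f_*'\,Bk(z+w),\,U\big)$, and averaging in $w$ does not ``replace $k$ by $k*_xM_{2\alpha}$'': since $\lambda\mapsto G_\Psi(s\lambda,t\lambda,u)$ is convex, Jensen in $w$ yields $\int G_\Psi\big(\cdot\,k(z+w)\,\cdot\big)M_{2\alpha}(w)\dd w\ \ge\ G_\Psi\big(\cdot\,(k*M_{2\alpha})(z)\,\cdot\big)$, the opposite of the inequality you need, and a pointwise domination of the shifted integrand by a $w$-dependent multiple of the unshifted one is not guaranteed either (it would require a doubling bound $\Psi(\tau r)\le C(\tau)\Psi(r)$, which Assumption~\ref{ass:gen-grad} does not impose). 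Hence neither the $\cR$-half of \eqref{eq:potential-bd} nor the $\limsup$ half of \eqref{eq:potential-conv} for $\cR$ is established by your argument, except in the case $\Psi(r)=\tfrac12 r^2$. This is precisely why Lemma~\ref{lem:kernel-convolve} also records $B^{-1}*_vM_\beta\le CB^{-1}$ and $k^{-1}*_xM_\alpha\le Ck^{-1}$: the paper treats $\cR$ by rewriting the integrand so that the kernels enter through $(Bk)^{-1}$ and applying the convexity of $G_\Psi$ in that form, and your proof of the $\cR$-statements needs to be redone along those lines.
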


\begin{proof}
We follow closely the argument of \cite[Lem.~4.2]{Erb23}. We first consider convolution in velocity $v$ and the functional $D_{\Psi_*}$.  Write $\mu=f\dd v$ and  $\cU=U\dd X\dd V \dd\omega$ with $X=(x,\xs)$, $V=(v,\vs)$ and put $F(X,V)=ff_*$. Similarly, write $f^\beta=f*_v M_\beta$,
  $F^\beta=F*_VM_\beta$, $U^\beta=M_\beta*U$ for the densities of
  $\mu*_vM_\beta$, $(\mu*_vM_\beta)^{\otimes 2}$ and $\cU*_VM_\beta$. Now,
  \begin{align*}
    D_{\Psi^*}(\mu*_vM_\beta)=\int Bk\cdot G_{\Psi^*}(F^\beta,T_\omega F^\beta) \dd X\dd V\dd\omega
    =:\int L^\beta_1\dd X\dd\omega\;.
  \end{align*}
 Note that $L^\beta_1(X,V,\omega)$ converges point-wise to
  $L(X,V,\omega):=Bk\cdot G_{\Psi^*}(F,T_\omega F)$ as $\beta\to0$ and
  $\int L\dd X\dd V\dd\omega=D_{\Psi^*}(\mu)$. From the commutation property \eqref{eq:commutation-scaling-convolution}, then convexity of $G_{\Psi^*}$ and Jensen's inequality we infer that
\begin{align*}
        L^\beta_1\leq Bk\cdot\big(\big[G(F,T_\omega F)\big]*_VM_\beta\big)=:L^\beta_2\;.
      \end{align*}
      Obviously also $L^\beta_2\to L$ point-wise as $\beta\to0$. To
      prove the convergence \eqref{eq:potential-conv} it suffices by the
      extended dominated convergence theorem to show that
      $\int L^\beta_2\dd X\dd V\dd\omega\to \int L\dd X\dd V\dd\omega$. But self-adjointness of the convolution yields
      \begin{align*}
        \int L^\beta_2\dd X\dd V\dd\omega= \int \big(B*_VM_\beta\big)k\cdot G_{\Psi^*}(F,T_\omega F)\dd X\dd V\dd\omega:=\int L^\beta_3\dd X\dd V\dd\omega\;,
      \end{align*}
      and again $L^\beta_3\to L$. Now, Assumption \ref{CK} and Lemma \ref{lem:kernel-convolve} ensure the bound $B*_VM_\beta\leq C B$ and hence $L^\beta_3\leq C\cdot  L$ for a constant $C>0$. Then, dominated
      convergence gives that indeed
      $\int L^\beta_2\dd X\dd V\dd\omega=\int L^\beta_3\dd X\dd V\dd\omega \to
      \int L\dd X\dd V\dd\omega$ as desired. Note that this also provides the bound for $D_{\Psi^*}$ in \eqref{eq:potential-bd}.

The corresponding claims for the functional $\cR$ under convolution in $v$ can be proven similarly by writing 
      \begin{align*}
        \cR(\mu*_vM_\beta,\cU*_VM_\beta) = \int (Bk)^{-1}\alpha\big(U^\beta, F^\beta, T_\omega F^\beta\big)\dd X\dd V\dd\omega\;,
      \end{align*}
and using convexity of the function $G_\Psi$ from \eqref{eq:alpha} as well as the bound $B^{-1}*_VM_\beta\leq C B^{-1}$. 

Finally, statements \eqref{eq:potential-conv} and \eqref{eq:potential-bd} for convolution in $x$ are proven verbatim, recalling that convolution in $x$ trivially commutes with the change of variables $T_\omega$ and again Lemma \ref{lem:kernel-convolve}.
\end{proof}
\smallskip

\textbf{Step 3: Chain rule for the regularised curve}

We claim 
\begin{equation}
\label{chain:rule:1}
\begin{aligned}
    &\frac{\dd}{\dd r}\cH(\mu^{\alpha,\beta,\gamma,\delta})\\
    =&\int(1+\log\fr) \d_r\fr\dd x\dd v\\
=&-\int\log\fr \dd\mathsf E_r^{\alpha,\beta,\gamma,\delta}
+\frac14\int\bar\nabla \log\fr \Ur\dd\sigma.
\end{aligned}
    \end{equation}

Indeed, to show the first equality in \eqref{chain:rule:1}, we deduce from convexity of  $s\mapsto s\log s$ and the bound \eqref{bdd:log:f} the following pointwise bounds: for any $h>0$ we have
\begin{equation*}
\label{fr:t}
\begin{aligned}
  &\frac{1}{h}|f^{\alpha,\beta,\gamma,\delta}_{t+h}\log f^{\alpha,\beta,\gamma,\delta}_{t+h}-\f\log \f|\\
  &\le \frac{1}{h}\max\{\log f^{\alpha,\beta,\gamma,\delta}_{t+h},\log \f\}|f^{\alpha,\beta,\gamma,\delta}_{t+h}-\f|\\
     &\le C(\alpha,\beta, \gamma)\log\big(\jb{x}+\jb{v}\big)\|\eta'_\gamma\|_{\infty}\int f^{\alpha,\beta,\gamma}_{r}\dd r\;.
\end{aligned}
\end{equation*}
As a consequence of the integral bounds \eqref{re:bdds}, the majorant function 
is integrable on $\R^{2d}$ and we can take the time derivative inside the integral by dominated convergence.

To obtain the second equality in \eqref{chain:rule:1}, we use Lemma \ref{lem:reg-TCRE} and claim that \eqref{eq:tcre-err} can be tested against $\log f^{\alpha,\beta,\gamma,\delta}$. Indeed, we argue as in Remark \ref{rem:generaltest}.  We deduce from Corollary \ref{cor:special h} and the bounds \eqref{bdd:log:f},\eqref{nabla:log:f}, as well as \eqref{eq:potential-bd} and \eqref{re:bdds}, that \eqref{eq:finite-test} holds for $\cU^{\alpha,\beta,\gamma,\delta}$ with $\varphi=\log f^{\alpha,\beta,\gamma,\delta}$. This allows to pass to the limit when approximating $\log\f$ with test functions in the transport term and the term involving $\cU^{\alpha,\beta,\gamma,\delta}$. Also the term involving $\mathsf E^{\alpha,\beta,\gamma,\delta}$ passes to the limit thanks to the bounds  \eqref{bdd:log:f} and \eqref{re:bdds} and we obtain
\begin{align*}
  \frac{\dd}{\dd r}\cH(\mu^{\alpha,\beta,\gamma,\delta})
  =&
  -\int \log\fr v\cdot\nabla_x\fr\dd x\dd v\\
  &+\int\log\fr \dd\mathsf E_r^{\alpha,\beta,\gamma,\delta}
+\frac14\int\overline\nabla \log\fr \dd\cU^{\alpha,\beta,\gamma,\delta}_r\;.
\end{align*}
We note, using integration by parts and the decay of $\fr$ at infinity, that the first term on the right-hand side vanishes.

Finally, we integrate \eqref{chain:rule:1} over $[s,t]$ to obtain
\begin{equation}
\label{chain:rule:2}
\begin{aligned}
&\cH(\mu^{\alpha,\beta,\gamma,\delta}_t)-\cH(\mu^{\alpha,\beta,\gamma,\delta}_s)\\
&=\int_s^t\int\log\fr\dd \mathsf E^{\alpha,\beta,\gamma,\delta}_r\dd r+\frac14\int_s^t\int\overline\nabla \log\fr \dd\cU^{\alpha,\beta,\gamma,\delta}_r\dd r\;.
\end{aligned}
\end{equation}

\medskip

\textbf{Step 4: Passing to the limit.}

We will pass to the limit in \eqref{chain:rule:2} by letting $\delta,\gamma,\beta,\alpha\to0$ in this order. We discuss each term individually. We will repeatedly 
use the dominated convergence theorem in the following form (see e.g.~\cite[Chap.~4,Thm.~17]{Roy88}): Let $(R_\delta)_{\delta>0}$ and $(I_\delta)_{\delta>0}$ be
families of measurable functions on a measure space $X$ with
$I_\delta\geq0$ and let $R,I$ be measurable. Assume that
$R^\delta,I^\delta$ converge point-wise to $R,I$ respectively,
$|R^\delta|\leq I^\delta$ a.e., and
$\lim_{\delta\to0}\int_XI^\delta=\int_XI$. Then we also have
$\lim_{\delta\to0}\int_X R^\delta=\int_XR$.

\textbf{RHS: Collision term.}
Consider the term
\[
\int_s^t\int \overline\nabla\log \fr U^{\alpha,\beta,\gamma,\delta}_r\dd\sigma\dd r\;.
\]

\begin{enumerate}
    \item \underline{Limit $\delta\to0$.}
    
The bound $|\log\fr|\leq C\big(1+\log( \jb{x}+\jb{v})\big)$ which is uniform in $\delta$, together with the integrability of $U_r^{\alpha,\beta,\gamma,\delta}$ given by Corollary \ref{cor:special h} and the uniform moment bound \eqref{re:bdds}, allow us to pass to the limit $\delta\to0$ by dominated convergence
to obtain
\begin{align}\label{eq:abc}
\int_s^t\int \overline\nabla\log f^{\alpha,\beta,\gamma}_r U^{\alpha,\beta,\gamma}_r\dd\sigma\dd r\;.
\end{align}

\item \underline{Limit $\gamma\to0$.}

We drop $\alpha$, $\beta$, and $r$ from the notation for convenience. Then, as in Remark \ref{rem:psi-psi*-id} with \eqref{eq:dual-est} and \eqref{eq:bd-psi-star}, as well with the bound \eqref{log:f:d:a}, we get the following estimate:
\begin{align*}
&|\overline\nabla \log f^\gamma \cdot U^\gamma|
=(1-\gamma)^{-1}|\overline\nabla \log f^\gamma \cdot (1-\gamma)U^\gamma|\\
&\leq
(1-\gamma)^{-1}\Psi\Big(\frac{(1-\gamma)U^\gamma}{\theta(f^\gamma)Bk}\Big) \theta( f^\gamma)Bk
+ |\overline\nabla  \log f^\gamma | \big|  f^\gamma (f^\gamma)_* - (f^\gamma)'( f^\gamma )'_*  \big|Bk\\
&\leq 
(1-\gamma)\Psi\Big(\frac{U}{\theta(f)Bk }\Big) \theta(f)Bk
+ C(|\xi|+|\xi_*|) \big(  f^\gamma (f^\gamma)_*)+(f^\gamma)'( f^\gamma )'_*  \big)Bk\;.
\end{align*}
Here we have set $\xi(x,v)=\jb{x}^2+\jb{v}^2$ and used that $U^\gamma=(1-\gamma)U$, $f^\gamma=(1-\gamma)f+\gamma g\geq (1-\gamma)f$ as well as the monotonicity of $\theta$ and \eqref{eq:psi-inc}. Note that the integral of the first term in the last line gives rise to $\int\cR(\mu^{\alpha,\beta}_r,\cU^{\alpha,\beta}_r)\dd r$.  By our Assumption \ref{CK} and the moment bounds \eqref{re:bdds} the integral on  $[0,T]\times\Omega$ of the second term converges as $\gamma\to0$. Hence we can pass to the limit in \eqref{eq:abc} as $\gamma\to0$ by the dominated convergence theorem and obtain.
\begin{align}\label{eq:ab}
\int_s^t\int \overline\nabla\log f^{\alpha,\beta}_r U^{\alpha,\beta}_r\dd\sigma\dd r\;.
\end{align}

\item \underline{Limits $\beta,\alpha\to0$.}
 
 Let us discuss the limit $\beta\to0$ and drop $\alpha$ and $r$ from the notation. Note that $\overline\nabla\log f^{\beta}U^{\beta}$ converges point-wise
  to $\overline\nabla\log fU$ as $\beta\to0$ at every $r$ where the
  densities of $\mu_r,\cU_r$ exist. We use again \eqref{eq:dual-est} to obtain the following majorant:
   \begin{align*}
    |\overline\nabla \log f^{\beta}U^{\beta}|
    &\leq    
   \Psi\Big(\frac{U^{\beta}}{\theta(f^{\beta})Bk}\Big)\theta(f^{\beta})Bk + \Psi^*\Big(\overline\nabla\log f^{\beta}\Big)\theta(f^{\beta})Bk\\
     &= G_{\Psi}\Big(F^{\beta}Bk,(F^{\beta})'Bk,U^{\beta}\Big) + G_{\Psi^*}\Big(F^{\beta}Bk,(F^{\beta})'Bk\Big)\\
    &=:
I_1^\beta+I_2^\beta\;,
  \end{align*}
  where we have set $F^{\beta}=f^{\beta}f^\beta_*$, $(F^\beta)'=T_\omega F^\beta$ and recall \eqref{eq:alpha}.
  Obviously $I_1^{\beta}\to I^0_1$ and $I_2^{\beta}\to I^0_2$ point-wise,
  where $I_1^0$ and $I_2^0$ are the corresponding expressions with
  $f^{\beta}$ and $U^{\beta}$ replaced by $f, U$. Note that the integral over $\Omega$ of $I^\beta_1,I^\beta_2$ give rise to $\cR(\mu^\beta,\cU^\beta)$ and $D_{\Psi^*}(\mu^\beta)$. By Proposition \ref{prop:potential-conv} these converge to $\cR(\mu,\cU)$ and $D_{\Psi^*}(\mu)$ for a.e.~$r\in [0,T]$. Thus by the dominated convergence theorem we can pass to the limit in the integral over $\Omega$ in \eqref{eq:ab}. Finally, to pass to the limit in the time integral, we use the already established almost everywhere in time convergence of the $\Omega$ integral and exhibit a majorant similar as above using \eqref{eq:potential-bd}. The limit $\alpha\to0$ can be discussed verbatim.
\end{enumerate}
    
\textbf{RHS: Error term.}

We discuss the term
\begin{align*}
&\int_s^t\int\log\fr \dd\mathsf E_r^{\alpha,\beta,\gamma,\delta}\dd r\\
&=
-(1-\gamma)\int_s^t\int\nabla_x\log\fr(x,v)\cdot u M_\beta(u)\eta_\delta(a)f^\alpha_{r-a}(x,v-u)\dd(a,u,x,v,r)\;.
\end{align*}
(For notational convenience, we assume that $\mu_r$ has density $f_r$ and write $f^\alpha=M_\alpha*_x f$, $f^\beta=M_\beta*_v f$.)
We claim that this term vanishes in the limit $\delta,\gamma,\beta\to0$, taken in this order.

The bound \eqref{nabla:log:f} togehter with the moment bound \eqref{re:bdds} allows us to pass to the limit $\delta\to0$ and subsequently $\gamma\to0$ by dominated convergence. Thus we are left with
\begin{align*}
&-\int_s^t\int\nabla_x\log f_r^{\alpha,\beta}(x,v)\cdot uM_\beta(u)f^\alpha_{r}(x,v-u)\dd u \dd x\dd v\dd r\\
&=
-\int_s^t\int\frac{\nabla_x f_r^{\alpha,\beta}}{ f_r^{\alpha,\beta}}(x,v)\cdot uM_\beta(u)f^\alpha_{r}(x,v-u)\dd u \dd x\dd v\dd r\;.
\end{align*}

By using Cauchy--Schwarz inequality, the last term can be bounded by
\begin{align}\label{eq:AB}
\Big[\int_s^t\int\frac{|\nabla_xf^{\alpha,\beta}_r|^2}{f^{\alpha,\beta}_r}\dd r\Big]^{\frac12}\times\Big[\int_s^t\int\frac{|\int |u|f^{\alpha}_r(x,v-u)M_\beta(u)\dd u|^2}{\int f^{\alpha}_r(x,v-u)M_\beta(u)\dd u}\dd x\dd v\dd r\Big]^{\frac12}\;.
\end{align}
For the first term in \eqref{eq:AB} we estimate by convexity of $(a,b)\mapsto a^2/b$ and Jensen's inequality:
\begin{align*}
\int\frac{|\nabla_xf^{\alpha,\beta}_r|^2}{f^{\alpha,\beta}_r}&=\int\frac{\Big|\int\nabla M_\alpha(y)f^\beta_r(x-y,v)\dd y\Big|^2}{\int M_\alpha(y)f^\beta_r(x-y,v)\dd y}\dd x\dd v
=\alpha^{-2}\int\frac{\Big|\int yM_\alpha(y)f^\beta_r(x-y,v)\dd y\Big|^2}{\int M_\alpha(y)f^\beta_r(x-y,v)\dd y}\dd x\dd v\\
&\le \alpha^{-2}\iint|y|^2M_\alpha(y)f^{\beta}_r(x-y,v)\dd y\dd x\dd v=\alpha^{-1}\;.
\end{align*}
Similarly, we obtain for the second term:
\begin{align*}
&\int\frac{\big|\int|u|f^{\alpha}_r(x,v-u)M_\beta(u)\dd u\big|^2}{\int f^{\alpha}_r(x,v-u)M_\beta(u)\dd u}\dd x\dd v
\leq\iint |u|^2M_\beta(u)f^{\alpha}_r(x,v-u)\dd u\dd x\dd \dd v
=\beta\;.
\end{align*}
Thus, the expression in \eqref{eq:AB} is bounded by $(t-s)\sqrt{\beta/\alpha}$ and hence the error term vanishes as $\beta\to0$ while $\alpha>0$ is fixed.
\medskip

\textbf{LHS: Entropy terms.}
 
Finiteness of the second moment of $\mu_r$ for each $r$ and the uniform moment bounds in \eqref{re:bdds} imply that for each $r$, the entropy $\cH(\mu^{\alpha,\beta,\delta,\gamma}_r)$ is bounded away from $-\infty$ uniformly in the regularisation. By convexity of $s\mapsto s\log s$ and the bound \eqref{bdd:log:f}, we have the estimate 
\begin{align*}
  &|\cH(f^{\alpha,\beta,\gamma,\delta}_r)-\cH(f_r^{\alpha,\beta,\gamma}))|\\
&\le
 C\int_{-1}^1\int_{\R^{2d}}(1+\log(\jb{x}+\jb{v})|f^{\alpha,\beta,\gamma}_{r-\delta s}-f_r^{\alpha,\beta,\gamma}|\eta(s)\dd x\dd \dd v\dd s
 \end{align*}
for all $r\in[0,T]$. By Remark \ref{rem:generaltest} and Corollary \ref{cor:special h} have that $r\mapsto \int (1+\log(\jb{x}+\jb{v})\dd\mu_r(x,v)$ is continuous (the additional term error term in the transport collision rate equation does not pose a problem), and hence for all $r$ the term above vanishes as $\delta\to0$.

The bound \eqref{log:f:d:a} yields the convergence $\cH(\mu^{\alpha,\beta,\gamma}_r)\to\cH(\mu^{\alpha,\beta}_r)$ as $\gamma\to0$ for all $r$. Thus we are left with $\cH(\mu_t^{\alpha,\beta})-\cH(\mu_s^{\alpha,\beta})$ in the left hand side. By convexity of $s\mapsto s\log s$ we have $\cH(\mu^{\alpha,\beta}_r)\leq \cH(\mu_r)$. We have $\mu^{\alpha,\beta}_r\to \mu_r$ weakly as $\alpha,\beta\to0$ and the second moments converge and hence by the lower semicontinuity of $\cH$ we deduce that $\cH(\mu^{\alpha,\beta}_r)$ increases to $\cH(\mu_r)$ for all $r$. Hence $\cH(\mu^{\alpha,\beta}_t)-\cH(\mu^{\alpha,\beta}_s)$ converges to $\cH(\mu_t)-\cH(\mu_s)$ for all $s,t$ and the finiteness of $\cH(\mu_0)$ and the boundedness of the right hand side in the limit $\alpha,\beta\to0$ yields that $\cH(\mu_t)$ is finite for all $t$.

\subsection{Variational characterisation}
\label{sec:var-char}

Now we are ready to prove our main theorem.
\begin{theorem}\label{thm:main}
Let $(\mu,\cU)\in \TCRE_T$ with $(\mu_t)_t\subset\cP_{2,2+\mu_+}(\R^{2d})$ and  $\cH(\mu_0)<\infty$ such that
\[
\int_0^T\int_{\R^{2d}}|x|^2+|v|^{2+\mu_+}\dd \mu_t\dd t<\infty\;,\]
where $\mu_+=\max(0,\mu)$. Then we have that
\begin{equation*}\label{eq:LT}
\cL_T(\mu,\cU):=\cH(\mu_T)-\cH(\mu_0) +\int_0^TD_{\Psi^*}(\mu_t)+\cR(\mu_t,\cU_t)\dd t \ge 0\;.
\end{equation*}
Moreover, we have $\cL_T(\mu,\cU)=0$ if and only if $\mu_t$ has density $f_t$ with $(f_t)$ a weak solution of \eqref{FBE} with  
\begin{equation}
\label{ass:D}
\int_0^T\cD(f_t)\dd t<\infty\;.
\end{equation}
In this case, we have the entropy identity
\begin{equation}
\label{cH-var}
    \cH(f_t)-\cH(f_0)=-\int_0^t \cD(f_s)\dd s\quad\forall t\in[0,T].
\end{equation}
\end{theorem}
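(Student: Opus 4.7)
The overall plan is to deduce all assertions from the chain rule of Theorem~\ref{thm:chainrule} combined with the pointwise convex-duality estimate of Remark~\ref{rem:psi-psi*-id}. First I dispose of the trivial case: if $\int_0^T[\cR(\mu_t,\cU_t)+D_{\Psi^*}(\mu_t)]\dd t = +\infty$, then $\cL_T(\mu,\cU) = +\infty \ge 0$ holds as soon as $\cH(\mu_T)$ is bounded below, which is granted by the hypothesis $\mu_T \in \cP_{2,2+\mu_+}$ together with the lower bound $\cH(\mu_T) \ge -\tfrac12\cE_{2,2}(\mu_T) - \tfrac{d}{2}\log(2\pi)$ recalled in Section~\ref{sec:prelim}.

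In the finite-dissipation case, both requirements in \eqref{eq:chain-ass} are met, so Theorem~\ref{thm:chainrule} applies. Since $\cR(\mu_t,\cU_t) < \infty$ forces $\cU_t \ll \nu_{\mu_t}$, I write $\dd\cU_t = W_t\,\theta(f_t) Bk\dd\sigma$ on $\{\theta(f_t)>0\}$, and the chain rule reads
\[
\cH(\mu_T) - \cH(\mu_0) = \frac{1}{4}\int_0^T\!\!\int W_t\,\overline\nabla\log f_t\,\theta(f_t) Bk\dd\sigma\dd t.
\]
Applying \eqref{eq:dual-est} pointwise with the choices $s = ff_*$, $\tau = f'f'_*$ and $w = W_t\theta(f_t)$ (so that $\log\tau - \log s = \overline\nabla\log f_t$), then multiplying by $Bk$ and integrating in $\sigma$ and $t$, I control the negative of the right-hand side by $\int_0^T [\cR(\mu_t,\cU_t) + D_{\Psi^*}(\mu_t)]\dd t$, which yields $\cL_T(\mu,\cU) \ge 0$.

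For the equality case, $\cL_T(\mu,\cU) = 0$ forces pointwise equality in the estimate above for $\sigma$-a.e.\ point and a.e.\ $t$. The equality characterisation in Remark~\ref{rem:psi-psi*-id} then forces $W_t\theta(f_t) = ff_* - f'f'_*$, i.e.
\[
\dd\cU_t = (ff_* - f'f'_*)Bk\dd\sigma \qquad\text{for a.e.\ } t.
\]
Substituting this identification into Definition~\ref{def:TCRE} and comparing with the symmetrised form \eqref{sys} shows that $(f_t)$ is a weak solution of \eqref{FBE}. Moreover, the same equality case supplies the pointwise identity $\tfrac{1}{4}\Psi(W_t)\theta(f_t) + G_{\Psi^*}(ff_*,f'f'_*) = \tfrac{1}{4}(\log(ff_*)-\log(f'f'_*))(ff_*-f'f'_*)$, which upon integration against $Bk\dd\sigma$ produces the matching
\[
\cR(\mu_t,\cU_t) + D_{\Psi^*}(\mu_t) = \cD(f_t) \quad\text{for a.e.\ } t.
\]
In particular \eqref{ass:D} holds, and rerunning the argument with $T$ replaced by $t$ yields the entropy identity \eqref{cH-var}.

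Conversely, given a weak solution $(f_t)$ of \eqref{FBE} satisfying \eqref{ass:D}, I set $\cU_t := (ff_* - f'f'_*)Bk\,\sigma$. The weak form of \eqref{FBE} together with \eqref{sys} shows $(\mu,\cU) \in \TCRE_T$; the finite-variation requirement reduces to $|\cU_t|(\Omega) \le 2\int ff_* Bk\dd\sigma$, which is controlled by the moment assumptions together with Assumption~\ref{CK}. Since $W_t = (ff_* - f'f'_*)/\theta(f_t)$ realises the equality case of the duality estimate, the matching identity $\cR + D_{\Psi^*} = \cD$ holds integrand-wise, so \eqref{ass:D} delivers finiteness of both $\int_0^T\cR(\mu_t,\cU_t)\dd t$ and $\int_0^T D_{\Psi^*}(\mu_t)\dd t$; applying the chain rule in its saturated form then yields both $\cL_T(\mu,\cU) = 0$ and \eqref{cH-var}. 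The principal obstacle is really already absorbed into Theorem~\ref{thm:chainrule} with its delicate regularisation argument; the present statement is then essentially careful bookkeeping of the pointwise convex-duality identity and its equality case.
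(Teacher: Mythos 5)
Your proposal follows the paper's proof essentially step by step: the trivial case via the lower bound on $\cH$ from the moment assumptions, the chain rule of Theorem~\ref{thm:chainrule} combined with the duality estimate \eqref{eq:dual-est} for the inequality $\cL_T(\mu,\cU)\ge 0$, the equality case of Remark~\ref{rem:psi-psi*-id} for the characterisation, and the same brief converse with $\cU=(f\fS-\fP\fSP)Bk\,\sigma$. There is, however, one concrete gap in your treatment of the equality case. The representation $\dd\cU_t=W_t\,\theta(f_t)Bk\,\dd\sigma$ and the duality estimate are only available on the set $\{\theta(f_t)>0\}$ (which carries all of $\cU_t$ and the chain-rule integrand), so saturation of the estimate identifies $U=(f\fS-\fP\fSP)Bk$ only there. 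Your claim that $\cL_T(\mu,\cU)=0$ ``forces pointwise equality for $\sigma$-a.e.\ point'' is not justified on $\{\theta(f_t)=0\}$: there one only knows $\cU_t=0$, while $f\fS-\fP\fSP$ could a priori be nonzero, so neither the conclusion that $(f_t)$ is a weak solution of \eqref{FBE} nor the matching identity $\cR(\mu_t,\cU_t)+D_{\Psi^*}(\mu_t)=\cD(f_t)$ (hence \eqref{ass:D} and \eqref{cH-var}) follows yet.

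The missing step, which the paper supplies explicitly, is to observe that $\cL_T(\mu,\cU)=0$ also forces the contribution of $D_{\Psi^*}(\mu_t)$ on $\{\theta(f_t)=0\}$ to vanish, i.e.\ $G_{\Psi^*}(f\fS,\fP\fSP)=0$ a.e.\ there; by Assumption~\ref{ass:gen-grad} this gives $f\fS=\fP\fSP$, and since $\theta(s,s)=s\,\theta(1,1)>0$ for $s>0$, the vanishing of $\theta(f\fS,\fP\fSP)$ then forces $f\fS=\fP\fSP=0$ on that set. Hence $U=f\fS-\fP\fSP$ holds $\sigma$-a.e.\ after all, and the weak-solution property and the identity $\cR+D_{\Psi^*}=\cD$ follow as you state. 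With this short addition your argument coincides with the paper's proof.
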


\begin{remark}
Recall that by Theorem \ref{thm:1} for initial data $f_0\in L_{2,2+\mu^+}(\R^{2d})$ with $\cH(f_0)<\infty$, there exists a unique weak solution and that this solution satisfies \eqref{ass:D}.     
\end{remark}

\begin{proof}
Let $(\mu,\cU)\in \TCRE_T$ satisfy the assumptions of the theorem. To show that $\cL_T(\mu,\cU)\geq 0$, we can assume that \eqref{eq:chain-ass} holds. Otherwise the estimate would hold trivially recalling that finiteness of the second moment implies that $\cH(\mu_t)>-\infty$ for every $t\in[0,T]$. Now, the chain rule Theorem \ref{thm:chainrule} yields
\[\cH(\mu_T)-\cH(\mu_0)=\int_0^T\int_{\{\theta(f_t)>0\}} \overline\nabla \log f_t\dd\cU_t\dd t\;,\]
where $f_t$ denotes the density of $\mu_t$. Let $\cU=U\sigma$. Then we deduce then inequality $\cL_T(\mu,\cU)\geq 0$ by using the estimate  \eqref{eq:dual-est} on the set $\{\theta(f)>0\}$ to obtain:
\begin{align}\nonumber
\frac14\overline\nabla \log f \cdot U &= -\frac14 \theta(f)(-\overline\nabla \log f)Bk\frac{U}{\theta(f)Bk}\\\label{eq:chain-est-pw}
&\geq - \frac14 \Psi\big(\frac{U}{\theta(f)Bk}\big)\theta(f)Bk -\frac14\Psi^{*}\big(-\bar\nabla \log f\big)\theta(f)Bk\;.
\end{align}
Now, assume that $\cL_T(\mu,\cU)=0$. Then for a.e.~$t$ and a.e.~on the set
$\{\theta(f_t)>0\}$ we must have equality in \eqref{eq:chain-est-pw} and hence by Remark \ref{rem:psi-psi*-id}
\[U = (\Psi^{*})'(-\bar\nabla \log f)\theta(f)Bk=
   \big[f\fS-\fP\fSP\big]Bk\;.\] 
On the set where $\theta(f_t)=0$ and hence $U_t=0$ we must have
$G_{\Psi^*}(f\fS,\fP\fSP)=0$. But the assumptions on $G_{\Psi^*}$ and
$\theta$ then yield that on $\{\theta(f)=0\}$ we have $f\fS=\fP\fSP=0$
and thus again $U=f\fS-\fP\fSP$, i.e.~$(f_t)$ is a weak solution to the fuzzy Boltzmann equation. Moreover, from $U=f\fS-\fP\fSP$ we immediately obtain that 
$D_{\Psi*}(\mu)+\cR(\mu,U)=\cD(\mu)$ and hence \eqref{cH-var} holds. Since $\cH(\mu_T)>-\infty$, \eqref{ass:D} holds in particular.

Conversely, if $(f_t)$ is a weak solution to the fuzzy
Boltzmann equation satisfying \eqref{ass:D}, then we have $(\mu,\cU)\in \TCRE_T$ for $\cU=U\sigma$ with $U=f\fS-\fP\fSP$ and we deduce along the previous lines that $\cL_T(\mu)=0$.
\end{proof}

\appendix  

\section{GENERIC formulation}
\label{app:GS}

Here we spell out in more detail how the fuzzy Boltzmann equation \eqref{FBE} can be cast formally into the GENERIC framework.

In the following, we choose as state space $\mathsf Z$ the set of probability densities on $\R^{2d}$ that are smooth enough and with appropriate decay so that all the manipulations below are justified. As tangent and cotangent space at $f\in \mathsf Z$ we formally consider the sets $T_f\mathsf Z=\{\xi\in L^2(\R^{2d}) : \int \xi=0\}$ and $T^*_f\mathsf Z=L^2(\R^{2d})$. As their duality pairing we consider the $L^2$ inner product, i.e. $\xi\cdot \eta=\int \xi \eta$.

We define the total energy $\mathsf E:\mathsf Z\to\R$ and entropy $\mathsf S:\mathsf Z\to\R$ via
\begin{align*}
\mathsf E(f)&=\frac12\int_{\R^{2d}}|v|^2f\dd v\dd x\;,\qquad \mathsf S(f)=-\cH(f)\;.
\end{align*}
For each $f\in \mathsf Z$ we define operators $\mathsf M(f)$ and $\mathsf L(f)$ acting on sufficiently smooth and decaying function $g$ via 
\begin{align*}
\mathsf M(f)g&=-\frac14\overline\nabla\cdot (Bk\Lambda (f)\overline\nabla g),\\ 
\mathsf L(f)g &=-\div(f J\nabla g),\quad J=\begin{pmatrix}
0 & {\sf id}\\
-{\sf id} & 0
\end{pmatrix},
\end{align*}
where $\nabla=\begin{pmatrix}
    \nabla_x\\
    \nabla_v
\end{pmatrix}$ denotes the gradient on $\R^{2d}$. $\Lambda(f)$ is shorthand for $\Lambda(ff_*,f'f_*')$ with $\Lambda(s,t):=(s-t)/(\log s-\log t)$ denoting the logarithmic mean and finally the "divergence" $\overline\nabla\cdot$ is the adjoint in $L^2$ of $\overline\nabla$. More precisely, for a function $U:\R^{4d}\times S^{d-1}\to \R$ we set 
\begin{align*}
\overline \nabla \cdot U (x,v) := \frac14\int
\big[&U(x,x_*,v,v_*,\omega)+U(x_*,x,v_*,v,\omega)\\
-&U(x,x_*,v',v_*',\omega)-U(x_*,x,v_*',v',\omega) \big]\dd x_*\dd v_*\dd\omega\;.
\end{align*}
One readily checks, that $\mathsf M$ is symmetric and positiv semi-definite and $\mathsf L$ is antisymmetic in the sense specified in Section 1.2. Moreover, a direct computation yields the validity of the Jacobi identity for $\mathsf L$.

We compute the differential of energy and entropy (w.r.t. $L^2$ or in other words the functional derivative) as
\begin{itemize}
    \item $\mathsf d\mathsf E(f)=\frac12 |v|^2$ and
    \begin{align*}
      \mathsf L(f)\mathsf d \mathsf E(f)&= -\frac12\div(f J\nabla v^2)=-\div\big(f\begin{pmatrix}
0 & {\sf id}\\
-{\sf id} & 0
\end{pmatrix}\begin{pmatrix}
0\\v
\end{pmatrix} \big)\\
&=-\div\big(\begin{pmatrix}
vf\\0
\end{pmatrix}\big)=-v\cdot\nabla_x f\;.
    \end{align*}
    \item $\mathsf d \mathsf S(f)=-(\log f+1)$ and
    \begin{align*}
       \mathsf M(f)\mathsf d\mathsf S(f)&=\frac14\overline\nabla\cdot \big(Bk\Lambda(f)\overline\nabla \log f\big)\dd x\dd v\\
&=\int Bk\big(f_*'f'-f_* f\big)\dd x_*\dd v_*\dd\omega\;.
    \end{align*}
\end{itemize}
    Hence, the corresponding GENERIC equation $\d_t f=\mathsf L\mathsf d \mathsf E + \mathsf M\mathsf d \mathsf S$ is indeed the same as the fuzzy Boltzmann equation \eqref{FBE}.

To verify that $(\mathsf Z,\mathsf E,\mathsf S,\mathsf L, \mathsf M)$ indeed constitute a GENERIC system, it remains to verify the non-interaction conditions 
\begin{equation*}
\mathsf L \mathsf d\mathsf S=\mathsf M\mathsf d\mathsf E=0\;.    
\end{equation*}
We obtain 
\begin{align*}
        \mathsf L\mathsf d \mathsf S(f)&=-\div (fJ\nabla \log f)\\
        &=-\div\big(\begin{pmatrix}
            0& {\sf id}\\
            -{\sf id} & 0
        \end{pmatrix}
        \begin{pmatrix}
            \nabla_x f\\
            \nabla_v f
        \end{pmatrix}\big)\\
        &=-\begin{pmatrix}
            \nabla_x\\
            \nabla_v
        \end{pmatrix}\cdot 
        \begin{pmatrix}
\nabla_v f\\
-\nabla_x f
        \end{pmatrix}=0
\end{align*}
and
    \begin{align*}
        \mathsf M\mathsf d \mathsf E&= -\frac18 \overline\nabla\cdot \big(Bk\Lambda(f) \overline\nabla |v|^2\big)=0\;,
    \end{align*}
where we used the fact that collision conserve energy, i.e. $|v|^2+|\vs|^2=|\vp|^2+|\vsp|^2$.
\printbibliography
\end{document}